\title{\vspace{-40pt}Simple closed curves, finite covers of surfaces, and power
subgroups of $\Out(F_n)$}
\author{Justin Malestein\thanks{University of Oklahoma, Department of Mathematics, 601 Elm Ave, Norman, OK, 73019, \url{jmalestein@math.ou.edu}}
\and Andrew Putman\thanks{Department of Mathematics, University of Notre Dame, 279 Hurley Hall, Notre Dame, IN 46556, 
 \url{andyp@nd.edu}}}
\date{}
\apptocmd{\thebibliography}{\raggedright}{}{}
\numberwithin{equation}{section}
\theoremstyle{plain}
\newtheorem{theorem}{Theorem}[section]
\newtheorem{maintheorem}{Theorem}
\newtheorem{proposition}[theorem]{Proposition}
\newtheorem{lemma}[theorem]{Lemma}
\newtheorem*{claim}{Claim}
\newtheorem{step}{Step}
\theoremstyle{definition}
\newtheorem{defn}[theorem]{Definition}
\newenvironment{definition}[1][]{\begin{defn}[#1]\pushQED{\qed}}{\popQED \end{defn}}
\newtheorem{rmk}[theorem]{Remark}
\newenvironment{remark}[1][]{\begin{rmk}[#1] \pushQED{\qed}}{\popQED \end{rmk}}
\newtheorem{eg}[theorem]{Example}
\newenvironment{example}[1][]{\begin{eg}[#1] \pushQED{\qed}}{\popQED \end{eg}}
\DeclareMathOperator{\Hom}{Hom}
\DeclareMathOperator{\Ker}{ker}
\DeclareMathOperator{\Image}{Im}
\DeclareMathOperator{\Mod}{Mod}
\DeclareMathOperator{\GL}{GL}
\newcommand\C{\ensuremath{\mathbb{C}}}
\newcommand\Z{\ensuremath{\mathbb{Z}}}
\newcommand\Q{\ensuremath{\mathbb{Q}}}
\newcommand\Field{\ensuremath{\mathbb{F}}}
\DeclareMathOperator{\HH}{H}
\DeclareMathOperator{\CC}{C}
\DeclareMathOperator{\Aut}{Aut}
\DeclareMathOperator{\Out}{Out}
\DeclareMathOperator{\Inn}{Inn}
\DeclareMathOperator{\Ind}{Ind}
\newcommand\Set[2]{\ensuremath{\{\text{#1 $|$ #2}\}}}
\DeclareMathOperator{\Ad}{Ad}
\DeclareMathOperator{\Th}{th}
\newcommand\Lie{\ensuremath{\mathcal{L}}}
\newcommand\FLie{\ensuremath{\mathcal{FL}}}
\newcommand\Ass{\ensuremath{\mathcal{A}}}
\newcommand{\modulo}[1]{\ (\mathrm{mod}\ #1)}
\newcommand\HSCC{\ensuremath{\HH^{\text{scc}}}}
\newcommand\HPrimp{\ensuremath{\HH^{\fO_p}}}
\newcommand\HO{\ensuremath{\HH^{\cO}}}
\newcommand\HOPrime{\ensuremath{\HH^{\cO'}}}
\newcommand\bd{\ensuremath{\mathbf{d}}}
\newcommand\bt{\ensuremath{\mathbf{t}}}
\newcommand\ba{\ensuremath{\mathbf{a}}}
\newcommand\be{\ensuremath{\mathbf{e}}}
\newcommand\bk{\ensuremath{\mathbf{k}}}
\newcommand\cB{\ensuremath{\mathcal{B}}}
\newcommand\cE{\ensuremath{\mathcal{E}}}
\newcommand\cK{\ensuremath{\mathcal{K}}}
\newcommand\cM{\ensuremath{\mathcal{M}}}
\newcommand\cO{\ensuremath{\mathcal{O}}}
\newcommand\tSigma{\ensuremath{\widetilde{\Sigma}}}
\newcommand\tgamma{\ensuremath{\widetilde{\gamma}}}
\newcommand\tX{\ensuremath{\widetilde{X}}}
\newcommand\te{\ensuremath{\widetilde{e}}}
\newcommand\tf{\ensuremath{\widetilde{f}}}
\newcommand\tast{\ensuremath{\widetilde{\ast}}}
\renewcommand\oe{\ensuremath{\overline{e}}}
\newcommand\ox{\ensuremath{\overline{x}}}
\newcommand\oy{\ensuremath{\overline{y}}}
\newcommand\oGamma{\ensuremath{\overline{\Gamma}}}
\newcommand\oPhi{\ensuremath{\overline{\Phi}}}
\newcommand\hGamma{\ensuremath{\widehat{\Gamma}}}
\newcommand\hsM{\ensuremath{\widehat{\mathcal{M}}}}
\newcommand\fG{\ensuremath{\mathfrak{G}}}
\newcommand\fB{\ensuremath{\mathfrak{B}}}
\newcommand\fS{\ensuremath{\mathfrak{S}}}
\newcommand\fO{\ensuremath{\mathfrak O}}
\newcommand\tfG{\ensuremath{\widetilde{\mathfrak{G}}}}
\begin{document}

\vspace{-10pt}
\maketitle

\vspace{-24pt}
\begin{abstract}
\noindent
We construct examples of finite covers of punctured surfaces where the first rational
homology is not spanned by lifts of simple closed curves.  More generally, for any
set $\cO \subset F_n$ which is contained in the union of finitely many $\Aut(F_n)$-orbits, we construct
finite-index normal subgroups of $F_n$ whose first rational homology is not
spanned by powers of elements of $\cO$.
These examples answer questions of Farb--Hensel, Kent, Looijenga, and March\'{e}.
We also show that the quotient of $\Out(F_n)$ by the subgroup generated
by $k^{\Th}$ powers of transvections often contains infinite order elements, strengthening
a result of Bridson--Vogtmann saying that it is often infinite.
Finally, for any set $\cO \subset F_n$ which is contained in the union of finitely many $\Aut(F_n)$-orbits, 
we construct integral linear representations of free groups that have
infinite image and map all elements of $\cO$ to torsion elements.
\end{abstract}

\section{Introduction}
\label{section:introduction}

Let $\Sigma_{g,n}$ be a genus $g$ surface with $n$ punctures and let
$\Mod_{g,n}$ be its mapping class group.  An important classical
tool for studying $\Mod_{g,n}$ is its action on $\HH_1(\Sigma_{g,n})$.  Recently
there has been a lot of interest in studying the action of $\Mod_{g,n}$ 
on the homology of finite covers of $\Sigma_{g,n}$.  See, for instance,
\cite{FarbHenselNYJ, FarbHenselIsrael, GrunewaldLubotzky2, GrunewaldLubotzky1, Hadari, Hadari2, Hadari3, Koberda, Koberda2,
KoberdaRamanujan, Liu, LooijengaPrym, McMullen, PutmanWieland, Sun}.  These representations
encode a lot of subtle information; for instance,
work of Putman--Wieland \cite{PutmanWieland} relates them to the virtual first
Betti number of $\Mod_{g,n}$.

\paragraph{Simple closed curve homology.}
Let 
$\pi\colon \tSigma \rightarrow \Sigma_{g,n}$ be a finite cover.
The {\em simple closed curve homology} of $\tSigma$, denoted
$\HSCC_1(\tSigma)$, is the subspace of
$\HH_1(\tSigma)$ spanned by the set
\[\Set{$[\tgamma] \in \HH_1(\tSigma)$}{$\tgamma$ is a component of $\pi^{-1}(\gamma)$ for a simple closed curve $\gamma$ on $\Sigma_{g,n}$}.\]
Another more algebraic way
to describe $\HSCC_1(\tSigma)$ is as follows.  Let $R \subset \pi_1(\Sigma_{g,n})$
be the subgroup corresponding to $\tSigma$.  We have 
$\HH_1(\tSigma) = \HH_1(R)$, and $\HSCC_1(\tSigma)$ is the span of the
set
\[\Set{$[x^k] \in \HH_1(R)$}{$x \in \pi_1(\Sigma_{g,n})$ is a simple closed curve and $k \geq 1$ is such that $x^k \in R$}.\]
In a similar way, we can define $\HSCC_1(\tSigma;A)$ for any commutative ring $A$.

\paragraph{Equality.}
A fundamental question about $\HH_1(\tSigma;A)$ is whether
$\HSCC_1(\tSigma;A) = \HH_1(\tSigma;A)$.  This seems to have been first
asked in print by March\'{e} \cite{Marche}, though Kent has informed us that
it also arose in her work on the congruence subgroup property (see Remark \ref{remark:kent} below).  
It has since been asked by
Farb--Hensel \cite{FarbHenselNYJ} and Looijenga \cite{LooijengaOberwolfach}, who
said that it was a ``bit of a scandal'' that the answer is not known.  The first
serious progress on this was work of Koberda--Santharoubane
\cite{KoberdaRamanujan}, who used TQFT representations of $\Mod_{g,n}$
to construct examples where 
$\HSCC_1(\tSigma;\Z) \neq \HH_1(\tSigma;\Z)$.  However, they were unable to
rule out the possibility that $\HSCC_1(\tSigma;\Z)$ is finite-index
in $\HH_1(\tSigma;\Z)$.  In other words, they could not replace $\Z$ with $\Q$.  
Another partial result is a theorem of Farb--Hensel 
\cite[Proposition 8.4]{FarbHenselNYJ} which says that
$\HSCC_1(\tSigma;\Q) = \HH_1(\tSigma;\Q)$ when the deck group
is abelian.  They also constructed examples of finite covers 
$\tSigma_{1,2} \rightarrow \Sigma_{1,2}$ where
$\HSCC_1(\tSigma_{1,2};\Q) \neq \HH_1(\tSigma_{1,2};\Q)$; however, they indicated
that despite an extensive computer search they were unable to find such examples
in genus $2$.

\paragraph{Inequality.}
Our first main theorem gives examples of covers of punctured surfaces in all genera where
$\HSCC_1(\tSigma;\Q) \neq  \HH_1(\tSigma;\Q)$.

\begin{maintheorem}
\label{maintheorem:scchomology}
For all $g \geq 0$ and $n \geq 1$ such that $\pi_1(\Sigma_{g,n})$ is nonabelian, there is a finite regular cover $\tSigma \to \Sigma_{g,n}$ with $\HSCC_1(\tSigma;\Q) \neq \HH_1(\tSigma;\Q)$. 
\end{maintheorem}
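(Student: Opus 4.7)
The plan is to reduce Theorem A to the more general free-group statement advertised in the abstract, and then attack that statement by representation theory of the deck group. Since $n \geq 1$, we have $\pi_1(\Sigma_{g,n}) \cong F_r$ where $r = 2g+n-1 \geq 2$, and finite regular covers of $\Sigma_{g,n}$ correspond to finite-index normal subgroups $R \triangleleft F_r$. The set $\cO \subset F_r$ of elements representing simple closed curves on $\Sigma_{g,n}$ lies in a finite union of $\Aut(F_r)$-orbits: there are only finitely many topological types of simple closed curves on $\Sigma_{g,n}$, each a single $\Mod_{g,n}$-orbit, so via $\Mod_{g,n} \to \Out(F_r)$ they form finitely many $\Aut(F_r)$-orbits of representatives. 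Hence it suffices to prove the stronger abstract statement: for any $\cO \subset F_r$ contained in finitely many $\Aut(F_r)$-orbits, there is a finite-index normal $R \triangleleft F_r$ such that $\HH_1(R;\Q)$ is not spanned by the powers of elements of $\cO$ lying in $R$. We may enlarge $\cO$ to be a union of finitely many $\Aut(F_r)$-orbits, making it $\Aut(F_r)$-invariant.

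To construct $R$, I would take $R = \Ker \phi$ for a surjection $\phi\colon F_r \twoheadrightarrow Q$ onto a finite group to be engineered. A Gasch\"utz/Fox-calculus argument (using the exact sequence $0 \to R^{\ab} \otimes \Q \to \Q[Q]^r \to I_Q \otimes \Q \to 0$) gives $\HH_1(R;\Q) \cong \Q[Q]^{r-1} \oplus \Q$ as $\Q[Q]$-modules, so for each nontrivial complex irreducible representation $\rho$ of $Q$, the $\rho$-isotypic component of $\HH_1(R;\C)$ has dimension $(r-1)(\dim \rho)^2$. The candidate proper subspace $V := \HSCC_1(\tSigma;\C)$ is a $\C[Q]$-submodule and is additionally invariant under the finite-index subgroup $\Aut(F_r)_R \leq \Aut(F_r)$ stabilizing $R$, because $\cO$ is $\Aut(F_r)$-invariant. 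The strategy is to arrange the data $(Q, \phi, \rho)$ so that some nontrivial irreducible $\rho$ of $Q$ has multiplicity in $V$ strictly less than its multiplicity $(r-1)\dim \rho$ in $\HH_1(R;\C)$; by dimension count on this single isotypic component, $V \subsetneq \HH_1(R;\C)$ follows.

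The main obstacle is producing such $(Q, \phi, \rho)$ for every $r \geq 2$. Each class $[x^{k_x}] \in V$ generates a cyclic $\C[Q]$-submodule whose $\rho$-isotypic has dimension at most $(\dim \rho)^2$, so heuristically it suffices to keep the number of $\C[Q]$-generators of $V$ strictly below $r-1$; the combined $(\C[Q], \Aut(F_r)_R)$-invariance should reduce the needed generating set to a finite number of orbit representatives of $\cO$. The tightest case is $r = 2$ (covering $\Sigma_{0,3}$ and $\Sigma_{1,1}$), where one must arrange that the projections of every $[x^{k_x}]$ to some nontrivial $\rho$-isotypic vanish collectively. I would attempt this by taking $Q$ of a structured form --- small semidirect products of abelian groups by symmetric or alternating groups, or carefully chosen nilpotent extensions --- and verifying via an explicit Fox-calculus computation on the finitely many $\Aut(F_r)$-orbit representatives that the resulting images inside the chosen $\rho$-isotypic lie in a proper $\C[Q]$-submodule. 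Producing such a $Q$ uniformly in $r$, and in particular in the critical rank-$2$ case, is the technical heart of the argument.
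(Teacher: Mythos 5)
Your reduction of Theorem~A to the abstract statement about a set $\cO \subset F_r$ contained in finitely many $\Aut(F_r)$-orbits is exactly the reduction used in the paper, and the overall frame -- kill a single isotypic component of $\HH_1(R;\C)$, which is nonzero by Gasch\"utz -- is also the right one. But there is a genuine gap at the heart of the argument, and it causes your dimension-counting mechanism to be both unnecessary and unworkable.

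The crucial observation you are missing is this: if $x^m \in R$ and $g$ denotes the image of $x$ in $G = F_r/R$, then $g$ \emph{fixes} the class $[x^m] \in \HH_1(R;\bk)$, simply because $x$ commutes with $x^m$. Consequently the cyclic $\C[G]$-submodule generated by $[x^m]$ is a quotient of $\Ind_{\langle g\rangle}^G \C$, and by Frobenius reciprocity its $\rho$-isotypic component has dimension $(\dim\rho)\cdot\dim\rho^{\langle g\rangle}$ -- which is \emph{zero} whenever $g$ acts on $\rho$ with no nonzero fixed vector. So one does not need to bound the number of $\C[G]$-generators of $\HSCC_1$ at all (indeed such a bound is not available: the $\Aut(F_r)_R$-invariance you invoke does not translate into a bound on $\C[G]$-generators, since $\Aut(F_r)_R$ acts nontrivially on $\HH_1(R;\C)$). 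Instead it suffices to build one $\bk$-representation $V$ of $G$ such that the image in $G$ of every element of $\cO$ fixes no nonzero vector of $V$; then the entire subspace $\HSCC_1$ projects to zero in the $V$-isotypic component. This is the paper's key criterion.

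The remaining work -- constructing such a pair $(G,V)$ -- is the real technical content and is not sketched in your proposal beyond the remark that it is ``the technical heart.'' The paper handles it in two steps. First, by Marshall Hall's theorem plus an intersection of characteristic subgroups, one passes to a finite-index normal $R_1 \lhd F_r$ in which every $\cO$-power becomes $p$-primitive for a suitable prime $p$; this reduces the problem to arranging the fixed-vector condition only for $p$-primitive elements (i.e., elements with nonzero image in $\HH_1(F_r;\Field_p)$), uniformly across all of them rather than orbit-by-orbit. Second, one builds a finite $p$-group $G$ with a central subgroup $C$ and a homomorphism $\Psi\colon C\to\Z/p$ such that every $g\in G$ with nontrivial image in $\HH_1(G;\Field_p)$ has a power in $C\setminus\Ker\Psi$; inducing the corresponding character of $C$ up to $G$ then gives the desired $V$. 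That construction is carried out via the mod-$p$ lower central series, Lazard's isomorphism between $\Lie^p(F)$ and the free $p$-restricted Lie algebra, and an explicit polynomial-over-$\Field_p$ argument. None of the ``structured small groups'' you propose (semidirect products by symmetric or alternating groups) will work without the $p$-group and Lie-theoretic machinery: the condition that \emph{every} primitive image avoid fixed vectors in a common $V$ is very restrictive, and the paper even remarks that no group can satisfy the stronger condition with ``every nontrivial $g$'' once $n \geq 3$.
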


\begin{remark}
The degrees of our covers are enormous, so it
is not surprising that they could not be found via a computer search.
\end{remark}

\begin{remark}
We do not know how to remove the condition $n \geq 1$ from Theorem \ref{maintheorem:scchomology}.  The
issue is that our proof of the key Proposition \ref{proposition:centerpower} below makes use of delicate
results about free restricted Lie algebras.  To prove the analogous result in the closed case, we would
have to generalize these results to certain $1$-relator restricted Lie algebras.
\end{remark}

\paragraph{Branched covers.}
We can define $\HSCC_1(\tSigma;\Q)$ for branched covers $\tSigma \rightarrow \Sigma_{g,n}$ exactly like for
unbranched covers.
Though we are unable to remove the condition $n \geq 1$ from
Theorem \ref{maintheorem:scchomology}, we do want to point out the following consequence for closed
genus $g$ surfaces $\Sigma_g$.

\begin{maintheorem}
\label{maintheorem:branched}
For all $g \geq 2$, there exists a finite branched cover $\tSigma \to \Sigma_g$ with 
$\HSCC_1(\tSigma;\Q) \neq \HH_1(\tSigma;\Q)$.
\end{maintheorem}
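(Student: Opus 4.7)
My plan is to deduce Theorem~\ref{maintheorem:branched} from Theorem~\ref{maintheorem:scchomology} by an essentially formal ``filling in the punctures'' argument. Fix $g \geq 2$. Since $\pi_1(\Sigma_{g,1})$ is free of rank $2g \geq 4$, Theorem~\ref{maintheorem:scchomology} supplies a finite regular cover $\pi \colon \tSigma \to \Sigma_{g,1}$ with $\HSCC_1(\tSigma;\Q) \neq \HH_1(\tSigma;\Q)$. Filling in every puncture of $\tSigma$ yields a closed surface $\overline{\tSigma}$, and $\pi$ extends to a finite branched cover $\overline{\pi} \colon \overline{\tSigma} \to \Sigma_g$ whose branch locus projects to the single point $p \in \Sigma_g$ that was the puncture of $\Sigma_{g,1}$. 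I claim that $\overline{\tSigma}$ is the branched cover we want.

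The inclusion $\tSigma \hookrightarrow \overline{\tSigma}$ induces a surjection $q \colon \HH_1(\tSigma;\Q) \twoheadrightarrow \HH_1(\overline{\tSigma};\Q)$ whose kernel $N$ is spanned by the homology classes of small loops encircling the punctures of $\tSigma$. Two observations link the two simple-closed-curve subspaces. First, $N \subseteq \HSCC_1(\tSigma;\Q)$, since the small loops around the punctures of $\tSigma$ are exactly the components of $\pi^{-1}(\delta)$ where $\delta$ is a small loop around the puncture of $\Sigma_{g,1}$, and $\delta$ is itself a simple closed curve on $\Sigma_{g,1}$. Second, any simple closed curve $\gamma$ on $\Sigma_g$ can be isotoped to avoid $p$, and so also represents a simple closed curve on $\Sigma_{g,1}$; in this position the preimage components of $\gamma$ in $\tSigma$ and in $\overline{\tSigma}$ coincide, so their classes in $\HH_1(\overline{\tSigma};\Q)$ are the images under $q$ of their classes in $\HH_1(\tSigma;\Q)$. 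Therefore $\HSCC_1(\overline{\tSigma};\Q) \subseteq q\bigl(\HSCC_1(\tSigma;\Q)\bigr)$.

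Now suppose for contradiction that $\HSCC_1(\overline{\tSigma};\Q) = \HH_1(\overline{\tSigma};\Q)$. Combined with the inclusion just established, this forces $\HSCC_1(\tSigma;\Q) + N = \HH_1(\tSigma;\Q)$, and since $N \subseteq \HSCC_1(\tSigma;\Q)$ we conclude $\HSCC_1(\tSigma;\Q) = \HH_1(\tSigma;\Q)$, contradicting the defining property of $\tSigma$. The entire content of the argument is Theorem~\ref{maintheorem:scchomology}; the passage from the punctured setting to the branched setting is formal, so the genuine obstacle (as the authors' remarks indicate) lies entirely in Theorem~\ref{maintheorem:scchomology} itself, not in this deduction.
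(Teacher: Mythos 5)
Your proof is correct and is essentially the same deduction the paper gives: take the unbranched cover of $\Sigma_{g,1}$ supplied by Theorem~\ref{maintheorem:scchomology}, fill in the punctures to obtain a branched cover of $\Sigma_g$, observe that the kernel of the induced surjection on $\HH_1(\cdot;\Q)$ is spanned by lifts of the simple loop around the puncture and hence lies in $\HSCC_1$, and that simple closed curves on $\Sigma_g$ can be pushed off the branch locus so that $\HSCC_1$ of the branched cover is contained in the image of $\HSCC_1$ of the unbranched cover. You spell out the inclusion $\HSCC_1(\overline{\tSigma};\Q)\subseteq q(\HSCC_1(\tSigma;\Q))$ a bit more explicitly than the paper does, but the argument is the same.
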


\noindent
This result can be derived from Theorem \ref{maintheorem:scchomology} as follows.  Let
$\tSigma' \rightarrow \Sigma_{g,1}$ be the unbranched cover provided by Theorem \ref{maintheorem:scchomology}.
The surface $\tSigma'$ is a finite-genus surface with punctures $p_1,\ldots,p_k$.
A small oriented loop $\gamma$ surrounding the puncture of $\Sigma_{g,1}$ lifts to a collection of
loops $\tgamma_1,\ldots,\tgamma_k$ such that $\tgamma_i$ surrounds $p_i$.
Fill in all the punctures of $\tSigma'$ to form a closed surface $\tSigma$.  We then have a branched
cover $\tSigma \rightarrow \Sigma_g$ whose branch points are precisely the $p_i$ such that the
map $\tgamma_i \rightarrow \gamma$ is a cover of degree greater than $1$.  Since the homology
classes of the $\tgamma_i$ lie in $\HSCC_1(\tSigma';\Q)$ and generate the kernel of
$\HH_1(\tSigma') \rightarrow \HH_1(\tSigma;\Q)$, we see that 
\[\HH_1(\tSigma';\Q) / \HSCC_1(\tSigma';\Q) \cong \HH_1(\tSigma;\Q) / \HSCC_1(\tSigma;\Q).\]
Since the left hand side is nonzero, so is the right hand side, as desired.

\paragraph{More general result.}
In fact, we prove a much more general result than Theorem \ref{maintheorem:scchomology}.  To state
it, let $F_n$ be the free group on $n$ generators and let $\cO \subset F_n$.  For
a finite-index $R < F_n$ and an abelian group $A$, define $\HO_1(R;A)$ to be the span in $\HH_1(R;A)$ of the set
\[\Set{$[x^k] \in \HH_1(R;A)$}{$x \in \cO$ and $k \geq 1$ is such that $x^k \in R$}.\]
We prove the following theorem.

\begin{maintheorem}
\label{maintheorem:ohomology}
Let $n \geq 2$ and let $\cO \subset F_n$ be contained in the union of finitely many $\Aut(F_n)$-orbits.  
Then there exists a finite-index $R \lhd F_n$ with $\HO_1(R;\Q) \neq \HH_1(R;\Q)$.
\end{maintheorem}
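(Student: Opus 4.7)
Since $\cO$ lies in the union of finitely many $\Aut(F_n)$-orbits, I fix representatives $w_1,\ldots,w_m \in F_n$ with $\cO \subseteq \bigcup_{i=1}^m \Aut(F_n)\cdot w_i$. The plan is to construct a single characteristic finite-index subgroup $R \lhd F_n$ together with a proper $\Aut(F_n)$-invariant subspace $V \subset \HH_1(R;\Q)$ that contains the finite set of classes $\{[w_i^{k_i}]\}$, where $k_i$ is the order of the image of $w_i$ in $F_n/R$. This suffices: since $R$ is characteristic, $\Aut(F_n)$ acts on $\HH_1(R;\Q)$; since each $w_i$ and $\phi(w_i)$ have the same order mod $R$, we have $[\phi(w_i)^k] = (k/k_i)[\phi(w_i)^{k_i}]$ whenever $\phi(w_i)^k \in R$; hence $\HO_1(R;\Q)$ is contained in the $\Aut(F_n)$-submodule generated by $\{[w_i^{k_i}]\}$, which in turn lies in $V$.

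Fix a prime $p$ and consider the mod-$p$ Jennings--Zassenhaus filtration $F_n = F_n^{(1)} \supseteq F_n^{(2)} \supseteq \cdots$. Each $F_n^{(c)}$ is a characteristic subgroup of finite $p$-power index, and the associated graded $\Lie = \bigoplus_c F_n^{(c)}/F_n^{(c+1)}$ is the free restricted Lie algebra on $n$ generators over $\Field_p$, carrying a natural $\GL_n(\Field_p)$-action induced from $\Aut(F_n)$. Each nontrivial $w \in F_n$ has a well-defined depth $d(w)$ and nonzero leading term $\overline{w} \in \Lie_{d(w)}$, both compatible with the $\Aut(F_n)$-action.

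Choose $c$ much larger than $\max_i d(w_i)$ and set $R = F_n^{(c)}$. The induced filtration on $R^{\ab}$ has associated graded injecting into $\bigoplus_{j \geq c} \Lie_j$. For each $\phi \in \Aut(F_n)$, the element $\phi(w_i)$ has depth $d(w_i)$, and its smallest power in $R$ is $\phi(w_i)^{p^{e_i}}$, where $e_i$ is minimal with $d(w_i) p^{e_i} \geq c$. The leading term of the class $[\phi(w_i)^{p^{e_i}}] \in R^{\ab}$ in the associated graded is the restricted power $\phi_*(\overline{w_i})^{[p^{e_i}]} \in \Lie_{d(w_i) p^{e_i}}$, where $\phi_* \in \GL_n(\Field_p)$ is the image of $\phi$. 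The crucial input is Proposition \ref{proposition:centerpower}, a purely algebraic statement about $\Lie$: given finitely many nonzero homogeneous $v_1,\ldots,v_m \in \Lie$, the $\GL_n(\Field_p)$-invariant subspace of $\Lie$ spanned by all iterated restricted powers $g(v_i)^{[p^e]}$ with $g \in \GL_n(\Field_p)$ and $e \geq 1$ is a proper subspace in sufficiently high degrees. Lifting a proper invariant subspace of $\bigoplus_{j \geq c} \Lie_j$ back through the filtration on $R^{\ab}$ produces the required $V \subset \HH_1(R;\Q)$.

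The main obstacle is Proposition \ref{proposition:centerpower} itself: one must pit the exponential growth of $\dim_{\Field_p} \Lie_c$ (roughly $p^c/c$) against the much thinner image of iterated restricted $p$-power maps, even after being spread out by the $\GL_n(\Field_p)$-action. This calls for a delicate combinatorial analysis of free restricted Lie algebras (of the sort the remark after Theorem \ref{maintheorem:scchomology} flags as the precise reason the closed-surface case is open). A secondary subtlety is bridging the $\Field_p$-level graded statement and the desired $\Q$-level statement on $\HH_1(R;\Q)$: one must check that the proper invariant subspace selected in the associated graded lifts to an honest proper subspace of $R^{\ab} \otimes \Q$, rather than filling out rationally after tensoring.
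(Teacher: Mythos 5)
Your proposal takes a genuinely different route from the paper, and as written it has several substantive gaps.

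\textbf{The paper's approach.} The paper does not prove Theorem~\ref{maintheorem:ohomology} directly from restricted Lie algebras; it reduces it to Theorem~\ref{maintheorem:primphomology} (which is proved separately using Proposition~\ref{proposition:centerpower}). The reduction runs as follows. For each orbit representative $s$, Marshall Hall's theorem produces a finite-index $T < F_n$ in which $s$ is a primitive element; intersecting the $\Aut(F_n)$-translates of $T$ gives a finite-index characteristic $R_1^s$ in which the appropriate power of any $\Aut(F_n)$-translate of $s$ has \emph{nonzero} image in $\HH_1(R_1^s;\Q)$. Intersecting over the $s$'s gives a characteristic $R_1$, and choosing a large prime $p$ makes these finitely many classes nonzero in $\HH_1(R_1;\Field_p)$. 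Applying Theorem~\ref{maintheorem:primphomology} to the free group $R_1$ then gives $R \lhd R_1$ with $\HO_1(R;\Q) \subset \HPrimp_1(R;\Q) \subsetneq \HH_1(R;\Q)$. The Zassenhaus filtration and free restricted Lie algebra appear only inside the proof of Proposition~\ref{proposition:centerpower}, not in this reduction.

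\textbf{Gaps in your proposal.} (a) The statement you attribute to Proposition~\ref{proposition:centerpower} --- that the $\GL_n(\Field_p)$-invariant subspace of $\Lie$ spanned by iterated restricted powers $g(v_i)^{[p^e]}$ is proper in high degree --- is not what the paper proves and is nowhere established. The actual Proposition~\ref{proposition:centerpower} constructs a finite $p$-group with a distinguished central character, and the underlying Lie computation (Proposition~\ref{proposition:quotientlie}) produces an $\Field_p$-functional \emph{nonvanishing} on all $v^{[p^k]}$ for degree-one $v$; that is a different and in some ways opposite statement to ``the span of restricted powers is proper.'' Your key lemma therefore remains unproved, and your outline does not indicate how it would follow. (b) You acknowledge but do not resolve the passage from an $\Field_p$-graded statement to the desired conclusion about $\HH_1(R;\Q)$. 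The paper bridges this gap via Theorem~\ref{theorem:primpcriterion}, which uses Gasch\"utz's theorem and a character-theoretic argument; your sketch has no analogue. (c) Even granting a proper $\GL_n(\Field_p)$-invariant subspace of the associated graded, deducing a proper $\Aut(F_n)$-submodule of $\HH_1(R;\Q)$ requires more: leading terms of sums can cancel and drop degree, so control of the leading terms of your generators does not on its own control the submodule they generate. Moreover the associated graded of $R^{\ab}$ for $R = F_n^{(c)}$ agrees with $\Lie_j$ only for $c \leq j < 2c$ (because $[R,R] \subset F_n^{(2c)}$); the identification of the leading term of $[\phi(w_i)^{p^{e_i}}]$ with $\phi_*(\overline{w_i})^{[p^{e_i}]}$ thus needs $d(w_i)p^{e_i} < 2c$, and it is not clear this can be arranged simultaneously for all $i$.

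The initial reduction in your first paragraph (that it suffices to find a characteristic $R$ for which the $\Aut(F_n)$-submodule generated by $\{[w_i^{k_i}]\}$ is proper) is correct, and the instinct to work with a characteristic subgroup is sound. But the paper realizes that instinct via a clean group-theoretic reduction to the already-proved $p$-primitive case, while your sketch attempts a direct Lie-algebraic construction whose central lemma and $\Field_p$-to-$\Q$ bridge are both missing.
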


\noindent
We highlight two special cases of Theorem \ref{maintheorem:ohomology}.

\begin{example}
Let $\cO$ be the set of {\em primitive elements} of $F_n$, that is, elements that lie in a free basis.
The group $\Aut(F_n)$ acts transitively on $\cO$.  In this case, for finite-index $R \lhd F_n$
the group $\HO_1(R;A)$ was introduced by
Farb--Hensel \cite{FarbHenselNYJ}, who called it the {\em primitive homology} of $R$ and 
asked whether or not $\HO_1(R;\Q) = \HH_1(R;\Q)$ always holds.  They constructed counterexamples to this
for $n = 2$.  They also proved that there was indeed equality in some situations.  For 
instance they could prove equality when $F_n / R$ is abelian and in some cases when $F_n/R$ is $2$-step nilpotent.
Theorem \ref{maintheorem:ohomology} provides a negative answer to their question for all $n \geq 2$.
\end{example}

\begin{example}
\label{example:stoscc}
We next explain why Theorem \ref{maintheorem:ohomology} is a vast generalization of 
Theorem \ref{maintheorem:scchomology}.  Let $g \geq 0$ and $n \geq 1$ be such that 
$F=\pi_1(\Sigma_{g,n})$ is nonabelian.  Let $\cO \subset F$ be the set of simple closed curves.  For
a finite-index $R \lhd F$, let $\tSigma_R$ be the associated finite cover of $\Sigma_{g,n}$, so
$\HH_1(R;\Q) = \HH_1(\tSigma_R;\Q)$ and $\HO_1(R;\Q) = \HSCC_1(\tSigma_R;\Q)$.  The set $\cO$
is the union of finitely many mapping class group orbits; see \cite[Chapter 1.3]{FarbMargalitPrimer}.
Since $\Aut(F)$ is larger than the mapping class group, $\cO$ is contained in the union 
of finitely many $\Aut(F)$-orbits.  Applying Theorem \ref{maintheorem:ohomology}, we obtain 
a finite-index $R \lhd F$ satisfying $\HO_1(R;\Q) \neq \HH_1(R;\Q)$.
Theorem \ref{maintheorem:scchomology} follows.  This can be generalized
in a wide variety of ways.  For instance, for some fixed $m \geq 0$ we could take $\cO$ to be the set
of all elements of $F$ that can be represented by a curve with at most $m$ self-intersections.
\end{example}

\begin{remark}
\label{remark:kent}
Let $g \geq 0$ and $n \geq 1$ be such that
$F=\pi_1(\Sigma_{g,n})$ is nonabelian.
In \cite{KentMathOverflow}, Kent asked whether the conclusion of Theorem \ref{maintheorem:ohomology} holds
for $\cO$ the set of all curves that do not fill $\Sigma_{g,n}$, i.e.\ curves whose complement contains a 
non simply connected component.  Since these are not contained in finitely many $\Aut(F)$-orbits, this question
is not addressed by Theorem \ref{maintheorem:ohomology}.
\end{remark}

\paragraph{$\mathbf{p}$-primitive homology.}
We now discuss a variant of Theorem \ref{maintheorem:ohomology}.
Fix a prime $p$.  Say that $x \in F_n$ is {\em $p$-primitive} if it maps to a nonzero element of $\HH_1(F_n;\Field_p)$. We denote the set of $p$-primitive elements by $\fO_p$.  Observe 
that primitive elements of $F_n$ are $p$-primitive for all primes $p$.
There are infinitely many $\Aut(F_n)$-orbits of $p$-primitive element of $F_n$, so we cannot use
Theorem \ref{maintheorem:ohomology} to construct finite-index $R \lhd F_n$ satisfying $\HPrimp_1(R;\Q) \neq \HH_1(R;\Q)$.
However, the following still holds.

\begin{maintheorem}
\label{maintheorem:primphomology}
For all $n \geq 2$ and primes $p$, there is a finite-index $R \lhd F_n$ with
$\HPrimp_1(R;\Q) \neq \HH_1(R;\Q)$.  Moreover, we can choose $R$ such that $F_n/R$ is a $p$-group.
\end{maintheorem}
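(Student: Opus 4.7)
The main difficulty is that, unlike for Theorem~\ref{maintheorem:ohomology}, the set $\fO_p$ is not contained in finitely many $\Aut(F_n)$-orbits.  However, it is constrained in a different way: the image of $\fO_p$ in the Frattini quotient $F_n/F_n^{(p)}=\Field_p^n$ (where $F_n^{(p)}:=[F_n,F_n]F_n^p$) is the finite set $\Field_p^n\setminus\{0\}$.  My plan is to exploit this to obtain a tractable description of $\HPrimp_1(R;\Q)$ for a carefully chosen $p$-group cover.

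The first step is a key identity in $R^{\ab}$: for $R\lhd F_n$ with $G=F_n/R$ a finite $p$-group, if $y\in F_n$ has image $\bar y\in G$ of order $p^j$ and $r\in R$, then
\[
\bigl[(yr)^{p^j}\bigr] = [y^{p^j}] + N_{\bar y}\cdot[r],
\]
where $N_{\bar y}=\sum_{i=0}^{p^j-1}\bar y^{-i}\in\Z[G]$ acts on $R^{\ab}$ by conjugation; this is proved by the straightforward expansion $(yr)^{p^j}=y^{p^j}\cdot r_{p^j-1}\cdots r_1 r_0$ with $r_i=y^{-i}ry^i$.  Next, choose $R\subseteq F_n^{(p)}$, which guarantees that every $p$-primitive $y\in F_n$ has $\bar y\in G\setminus G^{(p)}$.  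Picking a finite set $\{y_\alpha\}$ of representatives for the $p$-primitive cosets of $R$, the identity yields
\[
\HPrimp_1(R;\Q) = \Q[G]\bigl\langle[y_\alpha^{p^{j_\alpha}}]\bigr\rangle_\alpha \;+\; \sum_\alpha\bigl(\Q[G]\cdot N_{\bar y_\alpha}\bigr)\cdot\HH_1(R;\Q).
\]

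To force this to be a proper submodule, I would seek $G=F_n/R$ (with $R\subseteq F_n^{(p)}$) together with an irreducible $\Q[G]$-representation $V$ such that $V^{\langle\bar g\rangle}=0$ for every $\bar g\in G\setminus G^{(p)}$.  Given such a pair, the operator $N_{\bar y_\alpha}$ acts on $V$ as $p^{j_\alpha}$ times the projection onto $V^{\langle\bar y_\alpha\rangle}=0$, so the left ideal $\Q[G]N_{\bar y_\alpha}$ annihilates $V$ and the second summand above misses the $V$-isotypic component of $\HH_1(R;\Q)$.  Since $[y_\alpha^{p^{j_\alpha}}]$ is $\bar y_\alpha$-invariant, its projection to the $V$-isotypic component lies in $V^{\langle\bar y_\alpha\rangle}=0$, and by $\Q[G]$-equivariance the entire first summand also misses $V$.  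As $V$ appears in $\HH_1(R;\Q)\cong\Q[G]^{n-1}\oplus\Q$ for $n\geq 2$, this gives $\HPrimp_1(R;\Q)\subsetneq\HH_1(R;\Q)$, with $F_n/R$ a $p$-group by construction.

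The main obstacle is constructing the pair $(G,V)$.  For $G$ elementary abelian, every nontrivial character is trivial on some nonzero element, so no such $V$ exists---consistent with Farb--Hensel's positive result in the abelian case.  One must therefore build $G$ non-abelian, with enough rigidity that some irreducible $\Q[G]$-representation has trivial fixed subspace under every non-Frattini cyclic subgroup; this is a delicate combinatorial condition since $p$-groups always contain a central element of order $p$ in every nontrivial subgroup.  The natural source is a free nilpotent-of-class-$c$, exponent-$p^\ell$ quotient of $F_n$ coming from the Zassenhaus--Jennings $p$-central series, and I expect that the restricted Lie algebra input underlying Proposition~\ref{proposition:centerpower} is precisely what is needed to establish existence of such an irreducible $V$ for suitable $c$ and $\ell$.
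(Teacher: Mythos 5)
Your reduction is correct and follows essentially the same route as the paper: one reduces Theorem~\ref{maintheorem:primphomology} to constructing a finite $p$-group quotient $G=F_n/R$ (with $R$ contained in the Frattini subgroup so that the induced map on $\HH_1(\cdot;\Field_p)$ is an isomorphism) together with an irreducible characteristic-zero representation $V$ of $G$ on which no non-Frattini element has a nonzero fixed vector; this is exactly the paper's Theorem~\ref{theorem:primpcriterion}. Your derivation of the criterion is a small detour: you expand $(yr)^{p^j}$ and introduce the norm element $N_{\bar y}$, whereas the paper observes directly that $[x^m]\in\HH_1(R;\bk)^{\langle\bar x\rangle}$ because $x$ commutes with $x^m$, so its projection to any isotypic component $W$ with $W^{\langle\bar x\rangle}=0$ vanishes. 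Both arguments are correct and deliver the same reduction, yours with a little more machinery than needed.

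The genuine gap is the part you label ``the main obstacle'': you never construct the pair $(G,V)$, you only conjecture that the Zassenhaus $p$-central series and restricted Lie algebras should suffice. That construction is the real content of this theorem, and it occupies the bulk of the paper's \S\ref{section:lie}--\ref{section:centerpower}. Concretely, the paper proves Proposition~\ref{proposition:centerpower}: there is a finite $p$-group $G$ with $\HH_1(G;\Field_p)=\Field_p^n$, a central subgroup $C$, and a homomorphism $\Psi\colon C\to\Z/p$ such that every $g\in G$ with nontrivial image in $\HH_1(G;\Field_p)$ has some power landing in $C-\ker\Psi$; then $V=\Ind_C^G W$ for the character $W$ of $C$ given by $\Psi$ does the job, because centrality makes $C$ act by the scalar $\Psi$. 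Producing $\Psi$ requires taking $G=F/\gamma^p_{p^k+1}(F)$ with $p^k>(p-1)(n-1)$, identifying $C=\gamma^p_{p^k}(G)\cong\FLie^p_{p^k}(S)$ via Lazard's theorem, transporting the problem along $\iota\colon\FLie^p(S)\to\Ass^p(S)$, and then exhibiting a linear functional $\Phi$ on $\Ass^p_{p^k}(S)$ that is nonzero on all $p^k$-th powers of nonzero linear forms; the latter reduces to producing a degree-$p^k$ homogeneous polynomial over $\Field_p$ with no nontrivial zeros, which the paper builds inductively using $F(t_1,t_2)=t_1-t_1t_2^{p-1}+t_2$. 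None of this is automatic, and the degree constraint $p^k>(p-1)(n-1)$ (hence the enormous size of $G$) is exactly what makes the polynomial step work. Without carrying out this construction, the proof is incomplete.
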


\paragraph{Transvections.}
Using TQFT representations, Funar \cite{FunarTQFT} and Masbaum \cite{Masbaum}
proved that for $g \geq 2$
and $n \geq 0$ and $k \geq 11$, the subgroup of $\Mod_{g,n}$ generated by $k^{\text{th}}$
powers of Dehn twists is an infinite-index subgroup of $\Mod_{g,n}$.  In fact, their methods
show that the quotient of $\Mod_{g,n}$ by the subgroup generated by $k^{\Th}$ powers of Dehn twists
contains infinite-order elements.  Using some of the methods of the proof of
Theorem \ref{maintheorem:primphomology}, we will prove an analogous theorem for
$\Out(F_n)$.  The analogue in $\Out(F_n)$ of a Dehn twist is a {\em transvection}, which
is defined as follows.  Let $S$ be a basis for $F_n$ and let $x,y \in S$ be distinct
elements.  The transvection $\tau_{S,x,y}$ is then the automorphism 
$\tau_{S,x,y}\colon F_n \rightarrow F_n$ defined via the formula
\[\tau_{S,x,y}(z) = \begin{cases}
yz & \text{if $z=x$}, \\
z & \text{if $z \neq x$}\end{cases} \quad \quad \quad (z \in S).\]
All transvections are conjugate to each other, and transvections with $S$ the standard
basis for $F_n$ appear in the usual generating sets for $\Out(F_n)$.  

For $n \geq 2$ and $k \geq 1$, let $\fG_{n,k}$ be the subgroup of $\Out(F_n)$ generated by $k^{\Th}$ powers
of transvections.  Bridson--Vogtmann \cite{BridsonVogtmann} showed that the quotient $\Out(F_n) / \fG_{n,k}$ contains a copy
of the free Burnside group $\fB_{n-1,k}$ obtained by quotienting $F_{n-1}$ by the subgroup generated by all
$k^{\Th}$ powers.  In particular, $\Out(F_n) / \fG_{n,k}$ is infinite whenever $\fB_{n-1,k}$ is.  We will
strengthen this by showing that $\Out(F_n) / \fG_{n,k}$ often contains infinite order elements.

\begin{maintheorem}
\label{maintheorem:transvections}
For all $n \geq 2$, there exists an infinite set of positive numbers $\cK_n$ such that
$\Out(F_n) / \fG_{n,k}$ contains infinite-order elements for all $k \in \cK_n$.
\end{maintheorem}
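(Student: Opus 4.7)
I would build a homomorphism $\rho\colon \Out(F_n) \to Q$ with two features: (i) every transvection has $\rho$-image of uniformly bounded finite order $d$; (ii) some element of $\Out(F_n)$ has $\rho$-image of infinite order. From such a $\rho$ the theorem follows: all transvections are $\Aut(F_n)$-conjugate, so their $\rho$-images are conjugate in $Q$ and share the order $d$; hence $\rho(\fG_{n,k}) = 1$ for every $k \in d\Z_{>0}$, and an infinite-order image element lifts to one in $\Out(F_n)/\fG_{n,k}$. We then take $\cK_n$ to be the positive multiples of $d$.

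To construct $\rho$, I would repurpose the input of Theorem~\ref{maintheorem:primphomology}. Fix a prime $p$ and produce a characteristic finite-index $R \lhd F_n$ (arranging characteristic-ness by intersecting over the $\Aut(F_n)$-orbit) with $F_n/R$ a $p$-group and $W := \HPrimp_1(R;\Q)$ a proper subspace of $V := \HH_1(R;\Q)$. Because $\Aut(F_n)$ permutes $p$-primitive elements of $F_n$, it preserves $W$ and thus acts on the nonzero quotient $V/W$. The action of $\Inn(F_n)$ on $V/W$ factors through the finite conjugation action of $F_n/R$ on $R^{\ab}$, giving a finite normal subgroup $H \subseteq \GL(V/W)$; let $\rho$ be the induced map from $\Out(F_n)$ to the image of $\Aut(F_n)$ in $\GL(V/W)/H$.

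For (ii), the image of $\Aut(F_n)\to\GL(V)$ is a linear group acting on the nonzero $\Q$-vector space $V/W$, and one expects --- via the same analysis that underlies Theorem~\ref{maintheorem:primphomology} --- to locate an explicit element of $\Aut(F_n)$ whose action on $V/W$ has an eigenvalue that is an algebraic integer but not a root of unity; this element has infinite order even after collapsing the finite $H$. For (i), note that a transvection $\tau$ acts on the finite $p$-group $F_n/R$ by an element of $p$-power order $p^a$, so $\tau^{p^a}$ acts trivially on $F_n/R$. The goal is then to check that $\tau^{p^a}$ already acts on $V/W$ through $H$, giving $\rho(\tau)$ order dividing $p^a|H|$, which is the uniform bound $d$.

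\textbf{Main obstacle.} The crux is verifying (i): a priori transvections act on $R^{\ab}$ by infinite-order integer matrices, and we must genuinely use the primitive homology quotient together with the further collapse by $H$ to kill that infinite order. The argument should hinge on the observation that if $\tau=\tau_{S,x,y}$ and $g$ is a Nielsen--Schreier generator of $R$ expressed in the basis $S$, then $\tau^{p^a}(g)\cdot g^{-1}$ differs from an element obtained by inserting powers of the primitive element $y$, so its class in $R^{\ab}$ lies in $W$ up to a contribution coming from the finite conjugation action of $F_n/R$, i.e.\ from $H$. Making this precise, and especially checking it uniformly as $g$ ranges over a Nielsen--Schreier set, is where I expect to have to invoke the free restricted Lie algebra machinery used in the proof of Theorem~\ref{maintheorem:primphomology}.
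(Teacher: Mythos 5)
Your overall strategy matches the paper's: produce a linear representation on which all sufficiently high powers of transvections act trivially while some element acts with infinite order, then descend to $\Out(F_n)$ by killing the (finite) image of the inner automorphisms. Your conjugate-insertion idea for the transvection computation is actually correct and, once nailed down, is arguably more elementary than the paper's route: for $\tau = \tau_{S,x,y}$ and any $g \in R$, an easy induction on the length of $g$ shows that $\tau^m(g)g^{-1}$ is a product of conjugates $h\,y^{\pm m}\,h^{-1}$ with $h \in F_n$; if $m$ is a multiple of the order of the image of $y$ in $G=F_n/R$, then each such conjugate already lies in $R$, and its class is $[(hyh^{-1})^{\pm m}]$ with $hyh^{-1}$ primitive, hence lies in $\HPrimp_1(R;\mathbb{Z})$. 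So $\tau^m$ acts as the identity on $\HH_1(R;\Q)/\HPrimp_1(R;\Q)$ \emph{outright}; you do not need the further collapse by $H$ to kill it, nor any restricted Lie algebra input here (that machinery only enters in the construction of $G$ in Theorem~\ref{maintheorem:primphomology}). The paper instead uses the chain (Magnus-type) representation $\cM_R$ on $\CC_1(\tX_n;\Q)\cong(\Q[G])^n$ to get the explicit formula $\cM_R(\tau^m)(\oe_1)-\oe_1 = (1+g_2+\cdots+g_2^{m-1})\oe_2$ and then restricts to the $V$-isotypic component of $\HH_1(R;\Q)$ rather than passing to the quotient by $\HPrimp_1$.

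There are two genuine gaps in your proposal. First, and most seriously, you have no argument for (ii). The paper obtains the infinite-order element by citing Farb--Hensel \cite[Theorem~1.1]{FarbHenselIsrael}, which says that for every nonzero $v\in\HH_1(R;\Q)$ there is $\phi\in\Aut_R(F_n)$ with $\{\phi^k(v)\}$ infinite, and applies this to a nonzero vector of the $V$-isotypic component $W$. Your suggestion to ``locate an explicit element with an eigenvalue that is not a root of unity via the analysis underlying Theorem~\ref{maintheorem:primphomology}'' is not substantiated: Theorem~\ref{maintheorem:primphomology}'s machinery constructs the group $G$ and the representation $V$, but it produces no dynamics for $\Aut_R(F_n)$ on homology. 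You would need Farb--Hensel (or an equivalent), and moreover you need to see that the infinite orbit survives in the quotient $\HH_1(R;\Q)/\HPrimp_1(R;\Q)$; the cleanest way is to observe that the $V$-isotypic component $W$ satisfies $W\cap\HPrimp_1(R;\Q)=0$ (Theorem~\ref{theorem:primpcriterion}), so $W$ embeds equivariantly into the quotient---but then you are implicitly back to the paper's isotypic-component argument. Second, your plan to make $R$ characteristic by intersecting its $\Aut(F_n)$-orbit is problematic: passing to $R_0 = \bigcap_\phi \phi(R)$ can destroy the property $\HPrimp_1(R_0;\Q)\neq\HH_1(R_0;\Q)$, and you give no argument that it survives. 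The paper avoids this entirely by working with the subgroup $\Aut_R(F_n)$ (automorphisms preserving $R$ and acting trivially on $G$), noting that the relevant powers $\tau^{p^e}$ of every transvection already lie there and generate $\fG_{n,p^e}$ in $\Out(F_n)$.
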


\begin{remark}
The reader might object that we should also include the transvections $\tau'_{S,x,y}$ defined
via the formula
\[\tau'_{S,x,y}(z) = \begin{cases}
zy & \text{if $z=x$}, \\
z & \text{if $z \neq x$}\end{cases} \quad \quad \quad (z \in S).\]
But this would be superfluous: letting $S' = (S - \{x\}) \cup \{x^{-1}\}$, 
we have $\tau'_{S,x,y} = \tau_{S',x^{-1},y}^{-1}$.
\end{remark}

\begin{remark}
Theorem \ref{maintheorem:transvections} implies an analogous result for $\Aut(F_n)$.
Let $\tfG_{n,k}$ be the subgroup of $\Aut(F_n)$ generated by $k^{\text{th}}$ powers
of transvections.  
The surjection $\Aut(F_n) \rightarrow \Out(F_n)$ restricts to a surjection from $\tfG_{n,k}$
to $\fG_{n,k}$.  Thus $\Aut(F_n) / \tfG_{n,k}$ contains infinite-order elements whenever
$\Out(F_n)/\fG_{n,k}$ does.
\end{remark}

\begin{remark}
The proof of Theorem \ref{maintheorem:transvections} shows that we can take $\cK_n$
to be the set
\[\cK_n = \Set{$k$}{there exists a prime power $p^e$ dividing $k$ such that
$p^e > p(p-1)(n-1)$}.\]
We conjecture that there is some uniform $m \geq 2$ such that we can take
$\cK_n = \Set{$k$}{$k \geq m$}$.  Precisely explaining what it would take to prove this
using the techniques of this paper would require delving into the details of our proof, but
roughly speaking one would have to construct for each $k \geq m$ a finite quotient $G = F_n / R$
as in Theorem \ref{maintheorem:primphomology} such that $x^k = 1$ for all $x \in G$ that are the
image of a primitive element of $F_n$.  In our current construction, the orders of such elements
are forced to grow with $n$. 
\end{remark}

\paragraph{Infinite quotients of free groups.}
We conclude with a pair of interesting consequences of Theorems
\ref{maintheorem:ohomology} and \ref{maintheorem:primphomology}.  The first
is as follows.

\begin{maintheorem}
\label{maintheorem:squotient}
Let $n \geq 2$ and let $\cO \subset F_n$ be contained in the union of finitely many $\Aut(F_n)$-orbits.  
Then there exists an integral
linear representation $\rho\colon F_n \rightarrow \GL_d(\Z)$ with infinite image such that every
element of $\rho(\cO)$ has finite order.
\end{maintheorem}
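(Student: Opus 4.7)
The plan is to use Theorem \ref{maintheorem:ohomology} to build a finitely generated, virtually abelian, infinite quotient of $F_n$ in which every element of $\cO$ becomes torsion, and then invoke the classical theorem that finitely generated polycyclic-by-finite groups embed into some $\GL_d(\Z)$.

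First I would replace $\cO$ with its $F_n$-conjugation closure $\cO' = \bigcup_{g \in F_n} g \cO g^{-1}$. Because inner automorphisms lie in $\Aut(F_n)$, the set $\cO'$ is still contained in finitely many $\Aut(F_n)$-orbits, so Theorem \ref{maintheorem:ohomology} applied to $\cO'$ yields a finite-index normal subgroup $R \lhd F_n$ with $\HOPrime_1(R;\Q) \neq \HH_1(R;\Q)$. Let $W \subseteq \HH_1(R;\Z) = R/[R,R]$ be the $\Z$-subgroup generated by $\{[y^k] : y \in \cO',\ y^k \in R\}$. Since $\cO'$ is closed under $F_n$-conjugation, $W$ is invariant under the natural action of $F_n/R$ on $R/[R,R]$, and hence is normal in $F_n/[R,R]$.

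Set $\bar\Gamma := (F_n/[R,R])/W$, which fits into an extension
\[
1 \longrightarrow \HH_1(R;\Z)/W \longrightarrow \bar\Gamma \longrightarrow F_n/R \longrightarrow 1.
\]
By the choice of $R$ we have $W \otimes_\Z \Q = \HOPrime_1(R;\Q) \neq \HH_1(R;\Q)$, so $\HH_1(R;\Z)/W$ has positive rank and $\bar\Gamma$ is infinite. The group $\bar\Gamma$ is also finitely generated and contains the finitely generated abelian group $\HH_1(R;\Z)/W$ as a finite-index normal subgroup, hence is polycyclic-by-finite; by Auslander's theorem it embeds into some $\GL_d(\Z)$. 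Composing the natural quotient map $F_n \twoheadrightarrow \bar\Gamma$ with such an embedding produces a representation $\rho\colon F_n \to \GL_d(\Z)$ with infinite image. Finally, for any $x \in \cO \subseteq \cO'$, setting $N = [F_n:R]$ one has $x^N \in R$ and $[x^N] \in W$ by construction, so $\rho(x)^N = 1$ and $\rho(x)$ is torsion.

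The entire substantive content is absorbed into Theorem \ref{maintheorem:ohomology}, so no serious obstacle remains; the only minor subtlety is the need for $W$ to be $F_n/R$-invariant, which is handled by first passing from $\cO$ to the conjugation-closed set $\cO'$ (and which relies on the crucial observation that this enlargement does not increase the number of $\Aut(F_n)$-orbits).
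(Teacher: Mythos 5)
Your proof is correct and follows essentially the same route as the paper, which sketches Theorem~\ref{maintheorem:squotient} as ``similar'' to the detailed argument for Theorem~\ref{maintheorem:primpquotient}: form $F_n/[R,R]$, kill the $\Z$-span of the relevant power classes, and embed the resulting virtually abelian group into $\GL_d(\Z)$. Two small points of comparison. First, you are right to flag the normality of $W$ and to secure it by passing to the conjugation closure $\cO'$; the paper's sketch leaves this implicit (in the proof of Theorem~\ref{maintheorem:ohomology} the authors silently enlarge $\cO$ to a union of full $\Aut(F_n)$-orbits, which is automatically conjugation-closed since $\Inn(F_n)\subset\Aut(F_n)$, so the same fix is in the background). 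Second, you invoke Auslander's embedding theorem for finitely generated polycyclic-by-finite groups, whereas the paper does something more elementary: embed the finitely generated abelian group $A=\HH_1(R;\Z)/W$ into $\GL_{d'}(\Z)$ directly and then induce this representation up the finite-index inclusion $A\leq\bar\Gamma$ to get a faithful $\bar\Gamma\hookrightarrow\GL_d(\Z)$. Both are correct; the paper's version avoids citing Auslander and keeps the construction self-contained.
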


\noindent
For instance, as in Example \ref{example:stoscc} we can use Theorem \ref{maintheorem:squotient}
to construct for all $g \geq 0$ and $n \geq 1$ with $\pi_1(\Sigma_{g,n})$ nonabelian an integral
linear representation
$\rho\colon \pi_1(\Sigma_{g,n}) \rightarrow \GL_d(\Z)$ with infinite image such that
for all simple closed curves $x \in \pi_1(\Sigma_{g,n})$, the image $\rho(x)$ has finite order.
A slightly weaker version of this with the
representation landing in $\GL_d(\C)$ instead of $\GL_d(\Z)$ was originally
proved using TQFT representations
by Koberda--Santharoubane \cite[Theorem 1.1]{KoberdaRamanujan}, who attribute the
question of whether such representations exist to Kisin and McMullen.
Unlike us, Koberda--Santharoubane could also deal with closed surfaces.  

Our second
result is the following variant of Theorem \ref{maintheorem:squotient}.  

\begin{maintheorem}
\label{maintheorem:primpquotient}
For all $n \geq 2$ and primes $p$, there exists an integral linear representation
$\rho\colon F_n \rightarrow \GL_d(\Z)$ with infinite image such that every element of
$\rho(\fO_p)$ has finite order.
\end{maintheorem}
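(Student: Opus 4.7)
My plan is to derive Theorem \ref{maintheorem:primpquotient} from Theorem \ref{maintheorem:primphomology} via a standard affine-action linearization, in direct parallel to how one would pass from Theorem \ref{maintheorem:ohomology} to Theorem \ref{maintheorem:squotient}. Apply Theorem \ref{maintheorem:primphomology} to obtain a finite-index normal subgroup $R \lhd F_n$ with $F_n/R$ a finite $p$-group and $V := \HH_1(R;\Q)/\HPrimp_1(R;\Q) \neq 0$. Because $\fO_p$ is defined by a property of the image in $\HH_1(F_n;\Field_p)$, it is stable under conjugation in $F_n$; consequently $\HPrimp_1(R;\Z)$ is preserved by the conjugation action of $F_n/R$ on $\HH_1(R;\Z)$.

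Next, let $S_0 \lhd F_n$ be the normal subgroup generated by $[R,R]$ and by $\{x^k : x \in \fO_p,\ x^k \in R\}$, and let $S \subset R$ be the preimage of the torsion subgroup of $R/S_0$ under the projection $R \to R/S_0$. Both $S_0$ and $S$ are normal in $F_n$ (using that $\fO_p$ is closed under conjugation, that $[R,R]$ is characteristic in $R$, and that torsion is characteristic in any abelian group), and $R/S$ is a finitely generated torsion-free abelian group satisfying $(R/S) \otimes \Q = V$. Hence we obtain a short exact sequence
\[
1 \longrightarrow R/S \longrightarrow F_n/S \longrightarrow F_n/R \longrightarrow 1
\]
exhibiting $F_n/S$ as a finitely generated virtually abelian group of positive rank. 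It then suffices to embed $F_n/S$ into some $\GL_d(\Z)$: the composition $\rho\colon F_n \to F_n/S \hookrightarrow \GL_d(\Z)$ has infinite image since $R/S$ is infinite, and for any $x \in \fO_p$, choosing $k \geq 1$ with $x^k \in R$ gives $x^k \in S_0 \subset S$, so $\rho(x^k) = 1$ and $\rho(x)$ is torsion.

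The linearization is routine. Since $F_n/R$ is finite, $H^2(F_n/R;\, V) = 0$, so the displayed extension splits after tensoring with $\Q$, yielding an embedding $F_n/S \hookrightarrow \tfrac{1}{N}(R/S) \rtimes (F_n/R)$ for some $N \geq 1$; composing with the standard affine representation on the free $\Z$-module $\tfrac{1}{N}(R/S)$ and then conjugating by $\operatorname{diag}(N,\ldots,N,1)$ lands the image in $\GL_{d+1}(\Z)$, where $d$ is the rank of $R/S$. Alternatively, one may appeal to the classical theorem that every finitely generated virtually abelian group embeds in some $\GL_d(\Z)$. The substantive mathematical content lies entirely in Theorem \ref{maintheorem:primphomology}; beyond that, the only mild issue is ensuring that the affine representation is integral, which the scalar conjugation above handles.
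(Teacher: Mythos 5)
Your proof is correct and follows essentially the same route as the paper: apply Theorem~\ref{maintheorem:primphomology} to obtain $R$, pass to the quotient of $F_n$ by $[R,R]$ and (the preimage of) $\HPrimp_1(R;\Z)$ to get an infinite finitely generated virtually abelian group in which every $p$-primitive element becomes torsion, and then realize this group in $\GL_d(\Z)$. The only cosmetic difference is the final linearization: you kill the torsion of $R/S_0$ and split the resulting extension cohomologically to build an affine integral representation, whereas the paper keeps the torsion and simply induces a faithful integral representation of $\Gamma = F_n/S_0$ from the finite-index abelian subgroup $A = \HH_1(R;\Z)/\HPrimp_1(R;\Z)$.
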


\begin{remark}
For $n=2$, this
was originally proved by Zelmanov; see \cite[p.\ 140]{ZelmanovSurvey}.
\end{remark}

\begin{remark}
In Theorems \ref{maintheorem:squotient} and \ref{maintheorem:primpquotient}, the images of our
representations are virtually free abelian.
\end{remark}

\paragraph{Outline.}
We prove Theorem \ref{maintheorem:primphomology} in \S \ref{section:primphomology}, Theorem \ref{maintheorem:ohomology}
in \S \ref{section:ohomology}, Theorem \ref{maintheorem:transvections} in \S \ref{section:transvections}, and
Theorems \ref{maintheorem:squotient} and \ref{maintheorem:primpquotient}
in \S \ref{section:quotients}.  As we indicated above, Theorem \ref{maintheorem:scchomology} follows
from Theorem \ref{maintheorem:ohomology} and Theorem \ref{maintheorem:branched} follows from
Theorem \ref{maintheorem:scchomology}, so this completes the proofs of all of our main theorems.

\paragraph{Acknowledgments.}
We wish to thank Khalid Bou-Rabee, Martin Bridson, Mikhail Ershov, Benson Farb, Daniel Groves, Sebastian Hensel, Dawid Kielak, Eduard Looijenga, and Karen Vogtmann for
helpful correspondence and conversations. Andrew Putman is supported in part by National Science Foundation grants DMS-1255350 and DMS-1737434.

\section{\texorpdfstring{$\mathbf{p}$}{p}-primitive homology: Theorem \ref{maintheorem:primphomology}}
\label{section:primphomology}

This section contains the proof of Theorem \ref{maintheorem:primphomology}.  It has four parts.
In \S \ref{section:certificate}, we give a criterion for certifying that $\HPrimp_1(R;\Q) \neq \HH_1(R;\Q)$ for a finite-index
subgroup $R \lhd F_n$.
In \S \ref{section:reductiongroup}, we reduce Theorem \ref{maintheorem:primphomology}
to Proposition \ref{proposition:centerpower}, which asserts that a finite $p$-group with certain special properties exists.
In \S \ref{section:lie}, we review some basic material on $p$-restricted Lie algebras.
Finally, in \S \ref{section:centerpower} we prove Proposition \ref{proposition:centerpower}.

\subsection{Certifying the insufficiency of \texorpdfstring{$\mathbf{p}$}{p}-primitive homology}
\label{section:certificate}

This section gives a criterion for certifying that $\HPrimp_1(R;\Q) \neq \HH_1(R;\Q)$ for finite-index
subgroups $R \lhd F_n$.  This criterion is a variant of one identified by Farb--Hensel \cite{FarbHenselNYJ}.
Our main result is as follows.

\begin{theorem}
\label{theorem:primpcriterion}
For some $n \geq 2$, consider a finite-index $R \lhd F_n$, a field $\bk$ of
characteristic $0$, and a prime $p$.  Letting $G = F_n/R$, assume that there exists a
$\bk$-representation $V$ of $G$ such that for all $p$-primitive $x \in F_n$, the action
on $V$ of the image of $x$ in $G$ fixes no nonzero vectors.  Then
$\HPrimp_1(R;\Q) \neq \HH_1(R;\Q)$.
\end{theorem}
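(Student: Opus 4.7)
The plan is to construct a $\bk[G]$-equivariant linear map $\phi\colon\HH_1(R;\bk)\to V$ which is nonzero yet vanishes on $\HPrimp_1(R;\bk)$. Since both $\HH_1(R;\bk)$ and $\HPrimp_1(R;\bk)$ arise from their $\Q$-analogues by extension of scalars, such a $\phi$ will imply $\HPrimp_1(R;\Q)\neq\HH_1(R;\Q)$. I would set things up via the Fox calculus free resolution of $\Z$ over $\Z[F_n]$, which yields the Crowell-type exact sequence
\[0\to\HH_1(R;\bk)\to\bk[G]^n\xrightarrow{d}\bk[G]\]
of left $\bk[G]$-modules, where $d(a_1,\ldots,a_n)=\sum_i a_i(\bar y_i-1)$ for a fixed free basis $\{y_1,\ldots,y_n\}$ of $F_n$. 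Under this identification, a class $[r]\in\HH_1(R;\bk)$ for $r\in R$ corresponds to the tuple $(\overline{\partial r/\partial y_i})_i\in\bk[G]^n$ of Fox derivatives reduced modulo $R$. Any choice of vectors $v_1,\ldots,v_n\in V$ defines a $\bk[G]$-linear map $\phi\colon\bk[G]^n\to V$ by $\phi(a_1,\ldots,a_n)=\sum_i a_iv_i$, which I restrict to $\HH_1(R;\bk)$.

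For the vanishing on $\HPrimp_1(R;\bk)$, I would fix a $p$-primitive $x\in F_n$ and let $m$ be the order of its image $\bar x\in G$. Iterating the Fox derivative product rule gives $\partial x^m/\partial y_i=(1+x+\cdots+x^{m-1})\,\partial x/\partial y_i$, which reduces modulo $R$ to $N_m(\bar x)\cdot\overline{\partial x/\partial y_i}$, where $N_m(\bar x):=\sum_{j=0}^{m-1}\bar x^j$ is the norm element. Hence
\[\phi([x^m])=\sum_i N_m(\bar x)\,\overline{\partial x/\partial y_i}\cdot v_i=N_m(\bar x)\cdot\sum_i \overline{\partial x/\partial y_i}\,v_i.\]
Since $\bk$ has characteristic $0$ and $\bar x$ has finite order $m$, the operator $N_m(\bar x)$ acts on $V$ as $m$ times the projection onto $V^{\bar x}$; the hypothesis $V^{\bar x}=0$ therefore yields $\phi([x^m])=0$. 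Every generator $[x^k]$ of $\HPrimp_1(R;\bk)$ satisfies $m\mid k$ and $[x^k]=(k/m)[x^m]$ in $R^{\ab}\otimes\bk$, so $\phi$ annihilates all of $\HPrimp_1(R;\bk)$.

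It remains to choose the $v_i$ so that $\phi|_{\HH_1(R;\bk)}\neq 0$. Using that $\bk[G]$ is semisimple in characteristic $0$, one checks that $\phi$ vanishes on $\ker d=\HH_1(R;\bk)$ if and only if $(v_1,\ldots,v_n)\in V^n$ lies in the image of the map $V\to V^n$, $w\mapsto\bigl((\bar y_i-1)w\bigr)_i$. This image has $\bk$-dimension $\dim V-\dim V^G$, which is strictly less than $n\dim V=\dim V^n$ whenever $n\geq 2$ and $V\neq 0$; the hypothesis is only useful when $V\neq 0$, for otherwise the presence of a $p$-primitive $x\in R$ would force $V=V^{\bar x}=V^{1}=0$. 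Picking any $(v_i)$ outside the image then completes the construction. The main substantive point of the argument is the norm-vanishing observation, which crucially uses characteristic $0$; the rest is bookkeeping with the Fox calculus and the Crowell exact sequence, with the only real subtlety being a careful verification of $\bk[G]$-equivariance of the Crowell identification between $R^{\ab}\otimes\bk$ and $\ker d$.
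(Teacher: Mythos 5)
Your proof is correct, and it takes a genuinely different route from the paper's for the ``nonvanishing'' half of the argument. The paper first reduces to $V$ irreducible, invokes Gasch\"{u}tz's theorem (Theorem~\ref{theorem:gaschutz}) to see that the $V$-isotypic component $W$ of $\HH_1(R;\bk)$ is nonzero, and then shows the projection of $\HPrimp_1(R;\bk)$ to $W$ vanishes by noting that $[x^m]$ is $\bar{x}$-fixed (since $x$ commutes with $x^m$) while $W^{\bar x}=0$. Your norm-element computation $\overline{\partial x^m/\partial y_i}=N_m(\bar x)\,\overline{\partial x/\partial y_i}$ is the Fox-calculus avatar of exactly the same observation: $N_m(\bar x)$ is $m$ times the projection onto $V^{\bar x}=0$, so $[x^m]$ dies. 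Where you diverge is in certifying that the test map is nonzero on all of $\HH_1(R;\bk)$: the paper outsources this to Gasch\"{u}tz, whereas you build the map explicitly inside $\Hom_{\bk[G]}(\bk[G]^n,V)\cong V^n$ via the Crowell/relation sequence and do a dimension count, characterizing the maps that kill $\ker d$ as the image of $w\mapsto\bigl((\bar y_i-1)w\bigr)_i$, of dimension $\dim V-\dim V^G<n\dim V$. This buys you a proof that does not cite Gasch\"{u}tz's theorem and does not require passing to an irreducible constituent of $V$; in effect you are reproving the piece of Gasch\"{u}tz you need on the fly. Your approach is also closely aligned with the ``action on $1$-chains'' machinery the paper sets up in \S\ref{section:magnus} for Theorem~\ref{maintheorem:transvections}, though the paper does not use it for this particular theorem. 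One small caveat: your aside about the case $V=0$ is a little garbled (it reads as circular); the cleanest fix is simply to observe, as the paper does implicitly, that the hypothesis is vacuous when $V=0$ and the theorem is meant with $V\neq 0$, which is all your dimension count needs.
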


The proof of Theorem \ref{theorem:primpcriterion} requires one preliminary result.  Consider
a normal subgroup $R \lhd F_n$ and a field $\bk$.  The conjugation action of $F_n$ on $R$ induces
an action of $F_n$ on $\HH_1(R;\bk)$.  The restriction of this action to $R$ is trivial, so
we obtain an induced action of $G = F_n/R$ on $\HH_1(R;\bk)$.
We then have the following theorem of Gasch\"{u}tz \cite{Gaschutz}. 

\begin{theorem}
\label{theorem:gaschutz}
For some $n \geq 1$, consider a finite-index $R \lhd F_n$ and a field $\bk$ of
characteristic $0$.  Letting $G = F_n/R$, the
$G$-module $\HH_1(R;\bk)$ is isomorphic to $\bk \oplus \left(\bk[G]\right)^{n-1}$.
\end{theorem}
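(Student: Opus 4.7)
The plan is the standard topological proof via covering spaces combined with Maschke's theorem. First I would realize $F_n$ as the fundamental group of a rose $X$ (wedge of $n$ circles, one vertex, $n$ oriented edges), so that the finite-index normal subgroup $R$ corresponds to a finite regular cover $\pi\colon \tX \to X$ with deck group $G = F_n/R$ and $R = \pi_1(\tX)$. Under this identification $\HH_1(R;\bk) = \HH_1(\tX;\bk)$, and the conjugation $G$-action on the left-hand side corresponds to the deck-transformation action on the right-hand side; verifying this compatibility is the one place where normality of $R$ is used, and it is a routine Schreier-transversal check.

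Next, I would analyze the cellular chain complex of $\tX$ as a complex of $\bk[G]$-modules. Since the cover is regular and $\tX$ is a graph with $|G|$ vertices and $n|G|$ edges, the $G$-action on cells is free; choosing a lift of the basepoint and a lift of each of the $n$ edges yields $G$-equivariant isomorphisms $C_0(\tX;\bk) \cong \bk[G]$ and $C_1(\tX;\bk) \cong \bk[G]^n$. Combined with $\HH_0(\tX;\bk) \cong \bk$ (the trivial module, by connectedness of $\tX$), this produces two short exact sequences of finite-dimensional $\bk[G]$-modules:
\[
0 \to \HH_1(\tX;\bk) \to \bk[G]^n \to \Image\partial \to 0, \qquad 0 \to \Image\partial \to \bk[G] \to \bk \to 0.
\]

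The final step is pure representation theory. Since $\Char(\bk) = 0$ and $G$ is finite, Maschke's theorem makes $\bk[G]$ semisimple, so both sequences split as $G$-modules and the category of finite-dimensional $\bk[G]$-modules has the cancellation property (compare multiplicities of irreducibles in isotypic components). Splitting and combining the two sequences yields
\[
\HH_1(\tX;\bk) \oplus \bk[G] \cong \HH_1(\tX;\bk) \oplus \Image\partial \oplus \bk \cong \bk[G]^n \oplus \bk \cong \bk[G]^{n-1} \oplus \bk[G] \oplus \bk,
\]
and cancelling one copy of $\bk[G]$ gives $\HH_1(R;\bk) \cong \bk \oplus \bk[G]^{n-1}$; a quick Euler-characteristic check on dimensions $(n-1)|G|+1$ (agreeing with the Schreier rank of $R$) confirms the answer. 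There is no real obstacle beyond the identification of $G$-actions in the first step; all the module-theoretic content is handled cleanly by Maschke plus cancellation.
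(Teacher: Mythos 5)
The paper does not actually prove Theorem~\ref{theorem:gaschutz}; it cites it directly to Gasch\"utz \cite{Gaschutz} without argument. Your proof is correct and is the standard one: it fills in exactly the topological setup (chain complex of the regular cover $\tX$ of the rose, with $\CC_0(\tX;\bk) \cong \bk[G]$ and $\CC_1(\tX;\bk) \cong \bk[G]^n$ as free $\bk[G]$-modules) that the authors themselves use later in \S\ref{section:magnus} when constructing $\cM_R$, followed by Maschke's theorem and cancellation of finite-dimensional modules over the semisimple ring $\bk[G]$. The identification of the deck-group action with the conjugation action, the splitting of both short exact sequences, and the cancellation step are all valid; the only minor thing worth spelling out in a final write-up is that cancellation holds because isomorphism classes of finite-dimensional $\bk[G]$-modules are determined by the multiplicities of the (finitely many) irreducibles, so there is nothing to fix.
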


\begin{proof}[Proof of Theorem \ref{theorem:primpcriterion}]
Passing to an irreducible subrepresentation of $V$, we can assume that $V$ is irreducible.
Since $\HPrimp_1(R;\bk) = \HPrimp_1(R;\Q) \otimes_{\Q} \bk$ and
$\HH_1(R;\bk) = \HH_1(R;\Q) \otimes_{\Q} \bk$, it is enough to prove that
$\HPrimp_1(R;\bk) \neq \HH_1(R;\bk)$.  Let $W \subset \HH_1(R;\bk)$ be the $V$-isotypic component.
Theorem \ref{theorem:gaschutz} implies that $W \neq 0$, so it is enough to prove that
the projection of $\HPrimp_1(R;\bk)$ to $W$ is $0$.
Consider a $p$-primitive element $x \in F_n$ and let $m \geq 1$ be
such that $x^m \in R$.  We must prove that the projection of
$[x^m] \in \HH_1(R;\bk)$ to $W$ is trivial.  Let $g \in G$ be the image of $x$.  The fact
that $x$ commutes with $x^m$ implies that $g$ acts trivially on $[x^m] \in \HH_1(R;\bk)$.  Since
$x$ is $p$-primitive, our assumptions imply that the only vector in $V$ that is fixed by $g$ is $0$.
We conclude that the projection of $[x^m]$ to $W$ is $0$, as desired.
\end{proof}

\subsection{Reduction: \texorpdfstring{$p$}{p}-groups with special centers}
\label{section:reductiongroup}

In this section, we reduce Theorem \ref{maintheorem:primphomology} to the following proposition,
which we will prove in \S \ref{section:centerpower} below.

\begin{proposition}
\label{proposition:centerpower}
For $n,p \geq 2$ with $p$ prime, there exists a finite $p$-group $G$, a central subgroup
$C$ of $G$, and a homomorphism $\Psi\colon C \rightarrow \Z/p$ such that the following hold.
\begin{compactitem}
\item $\HH_1(G;\Field_p) = \Field_p^n$.
\item For all $g \in G$ whose image in $\HH_1(G;\Field_p)$ is nontrivial, some power
of $g$ is in $C - \Ker(\Psi)$.
\end{compactitem}
\end{proposition}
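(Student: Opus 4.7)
The plan is to apply the Jennings correspondence between finite $p$-groups and finite-dimensional nilpotent restricted Lie algebras over $\Field_p$. Under it, an $n$-generator finite $p$-group $G$ has associated graded $\mathcal{L} = \bigoplus_k D_k(G)/D_{k+1}(G)$ a nilpotent $p$-restricted Lie algebra with $\mathcal{L}_1 = G/\Phi(G) = \HH_1(G;\Field_p)$, bracket coming from the group commutator, and restricted $p$-operation $y \mapsto y^{[p]}$ induced by the group $p$-power (and raising grading by $p$). In particular, elements $g \in G$ with nonzero image in $\HH_1(G;\Field_p)$ have image $\overline{g} \in \mathcal{L}_1 \setminus \{0\}$, their powers $g^{p^k}$ have image $\overline{g}^{[p^k]} \in \mathcal{L}_{p^k}$, and central subgroups correspond to central graded subspaces. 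So Proposition \ref{proposition:centerpower} reduces to constructing a finite-dimensional nilpotent $p$-restricted Lie algebra $\mathcal{L}$ over $\Field_p$ with $\Dim \mathcal{L}_1 = n$, a central subspace $\mathfrak{c}$, and a linear $\psi\colon \mathfrak{c} \to \Field_p$ such that for every nonzero $y \in \mathcal{L}_1$, some iterate $y^{[p^k]}$ lies in $\mathfrak{c}$ with $\psi(y^{[p^k]}) \neq 0$; once this is done, standard realizability results lift $\mathcal{L}$ to a $p$-group $G$ with the desired $(C,\Psi)$.

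My approach is to take $\mathcal{L}$ to be a quotient of the free $p$-restricted Lie algebra $\FLie(x_1,\ldots,x_n)$, truncated in degrees greater than $p^M$ for a suitably large $M$. In this truncation $\mathcal{L}_{p^M}$ is automatically central, every $y^{[p^M]}$ lands there, and the problem becomes finding $\psi\colon \mathcal{L}_{p^M} \to \Field_p$ nonzero on every $y^{[p^M]}$ with $y \in \mathcal{L}_1 \setminus \{0\}$. By iterating the Jacobson formula
\[
(u+v)^{[p]} = u^{[p]} + v^{[p]} + \sum_{i=1}^{p-1} s_i(u,v),
\]
one writes $y^{[p^M]}$ for $y = \sum c_i x_i$ as an explicit polynomial in $c_1,\ldots,c_n \in \Field_p$ with coefficients in a basis for $\mathcal{L}_{p^M}$ of iterated brackets and restricted powers of the $x_i$. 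The goal is to pick the quotient and $\psi$ so that $\psi(y^{[p^M]})$ realizes a fixed polynomial separating nonzero $(c_i)$ from $0$ --- for instance $1 - \prod_i (1-c_i^{p-1})$, which equals $1$ on every nonzero tuple of $\Field_p^n$.

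A toy case illustrates the principle: for $p=2$, $n=2$, $M=1$ the naive truncation at degree $2$ gives
\[
y^{[2]} = c_1 x_1^{[2]} + c_2 x_2^{[2]} + c_1 c_2 [x_1,x_2],
\]
and choosing $\psi$ to take value $1$ on each of $x_1^{[2]}, x_2^{[2]}, [x_1,x_2]$ yields $\psi(y^{[2]}) = c_1 + c_2 + c_1 c_2 = 1 - (1-c_1)(1-c_2)$, which is $1$ for every nonzero $y$. However, counting constraints shows the naive approach already fails at $p=2$, $n=3$, $M=1$, where the seven nonzero $y^{[2]}$ impose an inconsistent linear system on $\psi$. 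The main obstacle is thus rescuing the construction for all $n$ and $p$: I would push $M$ up and pass to a carefully chosen quotient of the truncated $\FLie$, one which kills enough bracket and power monomials to eliminate inconsistent constraints while retaining enough structure for $\psi$ to realize a separating polynomial. This is where the ``delicate results about free restricted Lie algebras'' referenced in the introduction enter: Witt-type dimension formulas for the graded pieces of $\FLie$ and explicit descriptions of iterated restricted $p$-powers on $\mathcal{L}_1$ should pin down the correct quotient.

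Once the Lie-algebraic side is in hand, the finite-dimensional nilpotent $p$-restricted Lie algebra $\mathcal{L}$ realizes as a finite $p$-group $G$ with $\HH_1(G;\Field_p) = \Field_p^n$, and $(\mathfrak{c},\psi)$ transfers across the correspondence to give the required central subgroup $C$ and homomorphism $\Psi\colon C \to \Z/p$.
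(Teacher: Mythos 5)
Your high-level reduction matches the paper's: realize $G$ as a truncated quotient of a free group, pass via the Zassenhaus/Lazard correspondence to the free $p$-restricted Lie algebra truncated above degree $p^M$, and reduce to finding a linear functional $\psi$ on the top graded piece that is nonzero on every $y^{[p^M]}$ with $y \in \FLie_1^p(S) \setminus\{0\}$. But the actual execution has three gaps, of which the second is fatal for $p>2$.

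First, the technique that makes the functional computable is missing. Iterating the Jacobson formula in $\FLie^p(S)$ becomes intractable already at small degrees. The paper instead maps $\FLie^p(S)$ into the free associative algebra $\Ass^p(S)$ (with bracket $[x,y]=xy-yx$ and restricted power the ordinary $p$-th power), where expanding $(\sum a_i x_i)^{p^k}$ is completely transparent: the coefficient of a word is the corresponding product of $a_i$'s. This immediately gives that the achievable functions $f_\Phi(\mathbf{a}) = \Phi\left(\left(\sum a_i x_i\right)^{p^k}\right)$ are exactly the homogeneous degree-$p^k$ polynomials in $\mathbf{a}$.

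Second, the proposed separating polynomial $1 - \prod_i(1-c_i^{p-1})$ cannot be realized when $p>2$. Any realizable $f_\Phi$ is homogeneous of degree $p^k$, and since $p^k \equiv 1 \pmod{p-1}$ we get $f_\Phi(c\mathbf{a}) = c^{p^k} f_\Phi(\mathbf{a}) = c\, f_\Phi(\mathbf{a})$ for all $c \in \Field_p$. A function that is identically $1$ on $\Field_p^n \setminus\{0\}$ would force $c\,f_\Phi(\mathbf{a}) = f_\Phi(\mathbf{a})$ for all $c\neq 0$, which is impossible for $p>2$. (Equivalently: the monomials of your polynomial have degree $\equiv 0 \pmod{p-1}$, while a degree-$p^k$ homogeneous polynomial reduces to one whose monomials have degree $\equiv 1 \pmod{p-1}$.) The paper instead uses the inductively built polynomial $f_n$ with $f_1 = t_1$ and $f_n = f_{n-1} - f_{n-1}t_n^{p-1} + t_n$; all its monomials have degree $\equiv 1 \pmod{p-1}$, and the hypothesis $p^k > (p-1)(n-1)$ is precisely what lets each such monomial be lifted to one of degree exactly $p^k$.

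Third, your diagnosis that one should "pass to a carefully chosen quotient" of the truncated $\FLie^p(S)$ is backwards: quotienting only identifies more of the vectors $y^{[p^M]}$, making it harder to separate them with a linear $\psi$. The correct and complete fix is simply to increase the truncation depth (the paper takes $p^k > (p-1)(n-1)$) \emph{and} to change the target polynomial to one with monomials of the correct degree mod $p-1$; no further quotient of the Lie algebra or the group is needed.
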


\begin{remark}
To give some sense for what is going on in Proposition \ref{proposition:centerpower},
we give some easy examples of groups $G$ that satisfy its conclusion for small
values of $n$ and $p$.  In these examples, the central subgroup $C$ satisfies
$C \cong \Z/p$ and we can take $\Psi\colon C \rightarrow \Z/p$ to be the identity.
\begin{compactenum}
\item For any prime $p$, the cyclic group of order $p$ satisfies the conclusions of Proposition
\ref{proposition:centerpower} for $n=1$.  In this case, the subgroup $C$ is the entire group.
\item The $8$-element quaternion group satisfies the conclusions
of Proposition \ref{proposition:centerpower} for $n=2$ and $p=2$.  In this
case, the subgroup $C$ is the center, which is cyclic of order $2$.
\end{compactenum}
It is much harder to prove Proposition \ref{proposition:centerpower} for $n \geq 3$.
The issue is that in both of the above examples a stronger conclusion holds:
for {\em every} nontrivial $g \in G$ some power of $g$ lies in
$C - \Ker(\Psi)$.  One can show that there are no examples satisfying
this stronger condition for $n \geq 3$.  Indeed, given a group $G$ satisfying this
stronger condition one can use the construction in the proof
of Theorem \ref{maintheorem:primphomology} below to construct a $\C$-representation $V$
of $G$ such that no nontrivial element of $G$ fixes any nontrivial vector in $V$.  From this,
one can deduce that all abelian subgroups of $G$ are cyclic.  This implies that $n \leq 2$; see
\cite[Theorem 6.12]{Isaacs}.  See \cite{Mathoverflow} for more details.
\end{remark}

\begin{proof}[Proof of Theorem \ref{maintheorem:primphomology}, assuming Proposition \ref{proposition:centerpower}]
Let us first recall the setup.  Let $n \geq 2$ and let $p$ be a prime.  Our goal is to
construct a finite-index $R \lhd F_n$ such that $\HPrimp_1(R;\Q) \neq \HH_1(R;\Q)$ and
such that $F_n/R$ is a $p$-group.

Let $G$ and $C$ and $\Psi\colon C \rightarrow \Z/p$ 
be as in Proposition \ref{proposition:centerpower}.  Since 
$\HH_1(G;\Field_p) = \Field_p^n$, we can choose a homomorphism $\rho\colon F_n \rightarrow G$ 
taking a basis of $F_n$ to a set $S$ of elements of $G$ that projects to a basis
for $\HH_1(G;\Field_p)$.  Equivalently, the induced map 
$\rho_{\ast}\colon \HH_1(F_n;\Field_p) \rightarrow \HH_1(G;\Field_p)$
is an isomorphism.  Note that $\HH_1(G;\Field_p) = G/D$ with $D = G^p [G, G]$.
Since $S$ projects to a basis for $\HH_1(G;\Field_p)$, the group $G$ is generated
by $S \cup D$.
Since $G$ is a finite $p$-group, the group $D$ is the Frattini subgroup 
of $G$ (see \cite[Theorem 5.48]{RotmanBook}).  Since $S \cup D$ generates $G$ and
$D$ is the Frattini subgroup of $G$, we conclude that $S$ generates $G$, so $\rho$
is surjective.

If $x \in F_n$ is $p$-primitive, then $x$ projects to a nonzero element of
$\HH_1(F_n;\Field_p)$ and thus $\rho(x) \in G$ also projects to a nontrivial element
of $\HH_1(G;\Field_p)$.  Set $R = \ker(\rho)$.  By Theorem \ref{theorem:primpcriterion},
to prove that $\HPrimp_1(R;\Q) \neq \HH_1(R;\Q)$,
it is enough to construct a $\C$-representation $V$ of $G$ such
that for all $g \in G$ which project to a nonzero element of $\HH_1(G;\Field_p)$, the
action of $g$ on $V$ fixes no nonzero vectors.

By regarding $\Z/p$ as the set of $p^{\Th}$ roots of unity in $\C$, we can view
$\Psi\colon C \rightarrow \Z/p$ as a homomorphism from $C$ to $\C^{\ast}$.  
Let $W$ be the $1$-dimensional $\C$-representation of $C$ such that $c \in C$ acts
on $W$ as multiplication by $\Psi(c) \in \C^{\ast}$.  Define
\[V = \Ind_C^G W.\]
Consider $g \in G$ that projects to a nonzero element of $\HH_1(G;\Field_p)$.  We wish
to prove that $g$ fixes no nonzero vectors in $V$.  By assumption, some power of
$g$ equals an element $c \in C - \Ker(\Psi)$.  It is enough to prove that
$c$ fixes no nonzero vectors in $V$.  Choosing a set $\Lambda \subset G$
of coset representatives for $G/C$, we have
\[V = \bigoplus_{\lambda \in \Lambda} \lambda \cdot W.\]
Moreover, since $C$ is a central subgroup of $G$ it follows that for $w \in W$ 
and $\lambda \in \Lambda$ we have
\[c \cdot (\lambda \cdot w) = \lambda \cdot (c \cdot w) = \lambda \cdot (\Psi(c) w) = \Psi(c) (\lambda \cdot w).\]
We deduce that $c$ acts on $V$ as multiplication by $\Psi(c)$.  Since
$c \notin \Ker(\Psi)$, we conclude that
$c$ fixes no nonzero vectors in $V$, as desired.
\end{proof}

\subsection{Restricted Lie algebras}
\label{section:lie}

Before we prove Proposition \ref{proposition:centerpower}, we will need to discuss
some preliminary facts about free groups and Lie algebras that can be viewed as
``mod-$p$'' analogues of the familiar connection between the lower central series
of a free group and the free Lie algebra (see, e.g., \cite{SerreLie}).

The starting point is
the following definition, which was first made by Zassenhaus \cite{Zassenhaus}.

\begin{definition}
Let $\Gamma$ be a group and let $p$ be a prime.  The {\em Zassenhaus $p$-central series} of
$\Gamma$ is the fastest descending series
\[\Gamma = \gamma_1^p(\Gamma) \supset \gamma_2^p(\Gamma) \supset \gamma_3^p(\Gamma) \supset \cdots\]
satisfying the following two conditions:
\begin{compactitem}
\item $[\gamma_i^p(\Gamma),\gamma_j^p(\Gamma)] \subset \gamma_{i+j}^p(\Gamma)$ for all
$i,j \geq 1$.
\item For all $x \in \gamma_i^p(\Gamma)$, we have $x^p \in \gamma_{ip}^p(\Gamma)$.\qedhere
\end{compactitem}
\end{definition}

\begin{remark}
Explicitly, one can inductively define $\gamma_i^p(\Gamma)$ as that subgroup generated
by 
\begin{compactitem}
	\item $[x,y]$ for all $x \in \gamma_j^p(\Gamma), y \in \gamma_k^p(\Gamma)$ with
$j, k < i$ and $j + k \geq i$, and
	\item $x^p$ for all $x \in \gamma_j^p(\Gamma)$ with $j < i$ and $pj \geq i$. \qedhere
\end{compactitem}
\end{remark}

Given a group $\Gamma$, we define
\[\Lie_{i}^p(\Gamma) = \gamma_i^p(\Gamma) / \gamma_{i+1}^p(\Gamma) \quad \quad (i \geq 1).\]
The second condition in the definition of the Zassenhaus $p$-central series ensures
that $\Lie_{i}^p(\Gamma)$ is an $\Field_p$-vector space.  Define
\[\Lie^p(\Gamma) = \bigoplus_{i \geq 1} \Lie_i^p(\Gamma).\]
The commutator bracket on $\Gamma$ descends to an operation on $\Lie^p(\Gamma)$ that
endows it with the structure
of a graded Lie algebra over $\Field_p$.  More precisely, consider 
$x \in \gamma_i^p(\Gamma)$ and $y \in \gamma_j^p(\Gamma)$.  Letting
$\ox \in \Lie_i^p(\Gamma)$ and $\oy \in \Lie_j^p(\Gamma)$ be their images, the
Lie bracket $[\ox,\oy] \in \Lie_{i+j}^p(\Gamma)$ is the image of the commutator
bracket $[x,y] \in \gamma_{i+j}^p(\Gamma)$.

In fact, even more is true: Zassenhaus 
proved that $\Lie^p(\Gamma)$ is what is called a $p$-restricted Lie algebra, the definition of which
is as follows (\cite{Zassenhaus};
see \cite[\S 12]{DixonSautoyMannSegal} for a textbook reference).  We recommend that the reader not dwell on the three conditions in this
definition -- the only one we will explicitly use is the first.

\begin{definition} \label{definition:pliealg}
Fix a prime $p$.  A {\em $p$-restricted Lie algebra} over $\Field_p$ is a Lie algebra $A$ over
$\Field_p$ equipped with a $p^{\Th}$ power operation that takes
$x \in A$ to $x^{[p]} \in A$.  This operation must satisfy the following three conditions:
\begin{compactenum}
\item For $c \in \Field_p$ and $x \in A$, we have $\left(c x\right)^{[p]} = c^p x^{[p]}$.
\item For all $x \in A$, we have $\Ad(x^{[p]}) = \left(\Ad\left(x\right)\right)^p$,
where the right hand side indicates that we are taking the $p^{\text{th}}$
iterate of $\Ad(x)\colon A \rightarrow A$.
\item For $x, y \in A$, we have
\[(x+y)^{[p]} = x^{[p]} + y^{[p]} + \sum_{i=1}^{p-1} s_i(x,y),\]
where $s_i(x,y)$ is the coefficient of $t^{i-1}$ in the polynomial
$\left(\Ad\left(t x+y\right)\right)^{p-1}(x)$.\qedhere
\end{compactenum}
\end{definition}

\begin{remark}
If $A$ is a $p$-restricted Lie algebra, then for $x \in A$ and $k \geq 1$ we can define
$x^{[p^k]}$ by iterating the $p^{\Th}$ power operation $k$ times.
\end{remark}

The $p^{\Th}$-power operation on $\Lie^p(\Gamma)$ is induced by the operation of taking $p^{\Th}$ powers in $\Gamma$.  More precisely, consider $x \in \gamma_i^p(\Gamma)$.
Letting $\ox \in \Lie_i^p(\Gamma)$ be its image, the element
$\ox^{[p]} \in \Lie_{ip}^p(\Gamma)$ is the image of $x^p \in \gamma_{ip}^p(\Gamma)$.
In particular, the $p^{\Th}$ power operation on $\Lie^p(\Gamma)$ takes
$\Lie_i^p(\Gamma)$ to $\Lie_{ip}^p(\Gamma)$.

We now make the following definition.  

\begin{definition}
The {\em free $p$-restricted Lie algebra} on a set
$S$ is a $p$-restricted Lie algebra $\FLie^p(S)$ generated by $S$ such that for all
$p$-restricted Lie algebras $L$, we have
\[\Hom_{\text{Set}}(S,L) = \Hom_{\text{pLie}}(\FLie^p(S),L).\]
Here the right hand side is the set of morphisms of $p$-restricted Lie algebras.  
\end{definition}

\begin{remark}
See \cite[\S 2.7]{BahturinIdentical} for a textbook reference about $\FLie^p(S)$ 
that in particular proves that it exists.
\end{remark}

The free $p$-restricted Lie algebra $\FLie^p(S)$ has a natural grading that is respected
by the Lie bracket and the $p^{\Th}$ power operation.  Denoting the $i^{\Th}$ graded
piece by $\FLie_i^p(S)$, the degree $1$ piece $\FLie_1^p(S)$ is an $\Field_p$-vector
space with basis $S$.  The $p$-restricted Lie algebra $\FLie^p(S)$ 
is generated by $\FLie_1^p(S)$ in the sense that
its higher degree pieces are spanned by the result of repeatedly applying the
Lie bracket operation and the $p^{\Th}$ power operation to elements of $\FLie_1^p(S)$.

Lazard proved the following theorem connecting the free group and the free $p$-restricted
Lie algebra; see \cite[Theorem 6.5]{Lazard}.

\begin{theorem}
\label{theorem:lazardfree}
If $F$ is the free group on a set $S$ and $p$ is a prime, then $\Lie^p(F) \cong \FLie^p(S)$ as graded
$p$-restricted Lie algebras.
\end{theorem}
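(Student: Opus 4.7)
The plan is to build a natural morphism of graded $p$-restricted Lie algebras $\Phi\colon \FLie^p(S) \to \Lie^p(F)$ and then to prove it is an isomorphism by passing to restricted universal enveloping algebras and invoking the Magnus embedding of $\Field_p[F]$ into a tensor algebra. Since the inclusion $S \hookrightarrow F = \gamma_1^p(F)$ composes with projection to give a set map $S \to \Lie_1^p(F)$, the universal property of $\FLie^p(S)$ supplies such a $\Phi$ uniquely. Surjectivity is immediate: the inductive recipe for $\gamma_i^p(F)$ recorded in the remark after the definition of the Zassenhaus series shows that $\Lie^p(F)$ is generated as a $p$-restricted Lie algebra by $\Lie_1^p(F)$, which in turn is spanned over $\Field_p$ by the image of $S$.

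For injectivity, let $u(\cdot)$ denote the restricted universal enveloping algebra functor. The Poincar\'{e}--Birkhoff--Witt theorem for $p$-restricted Lie algebras says that the canonical maps $\FLie^p(S) \to u(\FLie^p(S))$ and $\Lie^p(F) \to u(\Lie^p(F))$ are injective, so it suffices to show that the induced graded algebra map
\[ u(\Phi)\colon u(\FLie^p(S)) \longrightarrow u(\Lie^p(F)) \]
is injective. From the universal properties, $u(\FLie^p(S))$ is the free associative $\Field_p$-algebra on $S$, namely the tensor algebra $T_{\Field_p}(V)$ where $V$ is the $\Field_p$-vector space with basis $S$, graded so that $V$ sits in degree $1$.

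To identify the target, I would appeal to the Jennings--Lazard theorem: for any group $\Gamma$, the Zassenhaus $p$-central series coincides with the dimension subgroup filtration $\gamma_i^p(\Gamma) = \Gamma \cap (1 + I^i)$ coming from the augmentation ideal $I$ of $\Field_p[\Gamma]$, and there is a canonical isomorphism of graded $\Field_p$-algebras
\[ u(\Lie^p(\Gamma)) \;\cong\; \mathrm{gr}_I \Field_p[\Gamma]. \]
Specialising to $\Gamma = F$, the Magnus embedding sends a free generator $s \in S$ to $1 + X_s$ in the completed tensor algebra and induces an isomorphism $\mathrm{gr}_I \Field_p[F] \cong T_{\Field_p}(V)$ of graded algebras, compatible with the generating sets. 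Tracing the definitions shows these identifications intertwine $u(\Phi)$ with the identity on $T_{\Field_p}(V)$, so $u(\Phi)$ is an isomorphism, which is more than enough.

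The main obstacle is the Jennings--Lazard step: verifying that the restricted $p^{\Th}$-power operation on $\mathrm{gr}_I \Field_p[\Gamma]$, induced from raising $1+x$ to the $p^{\Th}$ power in $\Field_p[\Gamma]$, really corresponds under the identification to the restricted operation on $\Lie^p(\Gamma)$, so that the isomorphism respects the entire $p$-restricted structure and not merely the bracket. Once that is in hand, everything else is formal: a comparison of universal properties against PBW, together with the well-known fact that the Magnus embedding is faithful on the associated graded of the free group algebra.
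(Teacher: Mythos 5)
The paper does not actually prove this theorem; it cites Lazard's original paper (Theorem 6.5 therein) and moves on, so there is no in-paper argument to compare against. Your sketch is a correct proof, and it is essentially the standard modern route, which is also close in spirit to Lazard's own. The genuinely non-formal ingredients you invoke --- the Poincar\'{e}--Birkhoff--Witt theorem for restricted Lie algebras, the Jennings--Lazard identification of the Zassenhaus $p$-central series with the mod-$p$ dimension subgroup filtration, Quillen's isomorphism $u(\Lie^p(\Gamma)) \cong \mathrm{gr}_I \Field_p[\Gamma]$, and the Magnus embedding for free groups --- are all real theorems that you would need to cite or prove, and you correctly flag the Jennings--Lazard/Quillen step as the central one. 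A point worth making explicit in a full write-up is that this step is not circular: Quillen's proof and the combinatorial arguments behind the dimension-subgroup identification apply to arbitrary groups and do not presuppose the free case. The remaining compatibility check (that under the two identifications of $u(\FLie^p(S))$ and $\mathrm{gr}_I\Field_p[F]$ with the tensor algebra on $V = \Field_p S$, the map $u(\Phi)$ becomes the identity) is routine, since a morphism out of $u(\FLie^p(S))$ is determined by where the generators $S$ go, and both sides send $s$ to the degree-one class corresponding to $s$. Also note that once $u(\Phi)$ is known to be an isomorphism, injectivity of $\Phi$ follows by chasing the commutative square with the PBW inclusion on the source; you do not separately need PBW for the target. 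Overall the proposal is sound, with the caveat that it outsources the hardest content to the cited theorems.
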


\subsection{The proof of Proposition \ref{proposition:centerpower}}
\label{section:centerpower}

In this section, we prove Proposition \ref{proposition:centerpower} and thus
complete the proof of Theorem \ref{maintheorem:primphomology}.  
We first recall the statement.  Consider $n,p \geq 2$ with $p$ prime.  We must
construct a finite $p$-group $G$, a central subgroup $C$ of $G$, and a homomorphism
$\Psi\colon C \rightarrow \Z/p$ such that the following two conditions hold.
\begin{compactitem}
\item $\HH_1(G;\Field_p) = \Field_p^n$.
\item For all $g \in G$ whose image in $\HH_1(G;\Field_p)$ is nontrivial, some power
of $g$ lies in $C - \Ker(\Psi)$.
\end{compactitem}
Let $S$ be an $n$-element set and let $F$ be the free group on $S$.  Pick $k \geq 1$
such that $p^k > (p-1)(n-1)$. (The reason for this assumption on $k$ will become clear later).
Set $G = F / \gamma_{p^k+1}^p(F)$, so
\[\HH_1(G;\Field_p) \cong G / \gamma_2^p(G) \cong F / \gamma_2^p(F) \cong \Field_p^n.\]
By Theorem \ref{theorem:lazardfree}, we have
\[\Lie^p(G) = \bigoplus_{i=1}^{p^k} \FLie^p_i(S).\]
Letting 
\[C = \gamma_{p^k}^p(G) \cong \gamma_{p^k}^p(F) / \gamma_{p^k+1}^p(F) \cong \FLie^p_{p^k}(S),\]
the fact that $\gamma_{p^k+1}^p(G) = 1$ implies that $C$ is central in $G$.  
The needed homomorphism $\Psi\colon C \rightarrow \Z/p$ is now provided by Proposition \ref{proposition:quotientlie}
below.

\begin{proposition}
\label{proposition:quotientlie}
Let $n \geq 2$, let $p$ be a prime, and let $S$ be an $n$-element set.
Pick $k \geq 1$ such that $p^k > (p-1)(n-1)$.
Then there exists an $\Field_p$-linear map
$\Psi\colon \FLie_{p^k}^p(S) \longrightarrow \Field_p$
such that $\Psi(v^{[p^k]}) \neq 0$ for all nonzero $v \in \FLie_1^p(S)$.
\end{proposition}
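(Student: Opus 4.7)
The plan is to reduce the statement to an explicit construction of a polynomial function by working inside the restricted universal enveloping algebra of $\FLie^p(S)$. By Jacobson's theorem, this enveloping algebra is the free associative algebra $U = \Field_p\langle x_1,\ldots,x_n\rangle$, with $\FLie_{p^k}^p(S)$ embedded in the degree-$p^k$ piece $U_{p^k}$ and the restricted $p^{\Th}$-power operation coinciding with the associative power. Thus for $v = c_1 x_1 + \cdots + c_n x_n$,
\[ v^{[p^k]} = v^{p^k} = \sum_{(i_1,\ldots,i_{p^k})} c_{i_1}\cdots c_{i_{p^k}} \, x_{i_1}\cdots x_{i_{p^k}} \in U_{p^k}. \]
Any $\Field_p$-linear $\Psi$ on $\FLie_{p^k}^p(S)$ extends to a functional on $U_{p^k}$ specified by scalars $a_M \in \Field_p$ indexed by length-$p^k$ words $M$ in the $x_i$, and hence $\Psi(v^{[p^k]}) = \sum_M a_M c_{i_1(M)}\cdots c_{i_{p^k}(M)}$ is a polynomial function of $c \in \Field_p^n$ in which every monomial has formal degree $p^k$.

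The next step is to identify which polynomial functions $\Field_p^n \to \Field_p$ arise this way. Let $R = \Field_p[c_1,\ldots,c_n]/(c_j^p - c_j)$ be the ring of all such functions, and let $R_1 \subset R$ be the subspace of those $f$ satisfying $f(\lambda c) = \lambda f(c)$ for all $\lambda \in \Field_p^{\ast}$; a basis for $R_1$ is the set of reduced monomials $c_1^{f_1}\cdots c_n^{f_n}$ with $f_j \in \{0,\ldots,p-1\}$ and $\sum f_j \equiv 1 \pmod{p-1}$. The functions $\Psi(v^{[p^k]})$ automatically lie in $R_1$ since $p^k \equiv 1 \pmod{p-1}$, and I claim this realization map is surjective onto $R_1$. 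Indeed, given a target monomial with reduced exponents $(f_1,\ldots,f_n)$, I seek nonnegative integers $e_j$ with $e_j = 0$ iff $f_j = 0$, $e_j \equiv f_j \pmod{p-1}$ when $f_j \geq 1$, and $\sum e_j = p^k$: the largest value of $\sum f_j$ occurring in $R_1$ is $(p-1)(n-1)+1$ (obtained by dropping one exponent from $p-1$ down to $1$), so the hypothesis $p^k > (p-1)(n-1)$ gives $p^k \geq \sum f_j$, while the excess $p^k - \sum f_j$ is automatically divisible by $p-1$ and can be distributed among the nonzero $e_j$'s. Taking $\tilde\Psi$ to read off the coefficient of any word with $x_j$ appearing $e_j$ times then realizes the monomial.

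To finish, I exhibit a nowhere-vanishing element of $R_1$: the ``first nonzero coordinate'' function
\[ f(c) = \sum_{j=1}^n c_j \prod_{i=1}^{j-1}\bigl(1 - c_i^{p-1}\bigr). \]
Each expanded monomial has the form $\pm c_j \prod_{i \in T} c_i^{p-1}$ with $T \subseteq \{1,\ldots,j-1\}$, of effective degree $1 + |T|(p-1) \equiv 1 \pmod{p-1}$ and at most $(p-1)(n-1)+1$, so $f \in R_1$. For $c \neq 0$ with first nonzero coordinate $c_k$, terms with $j < k$ vanish because $c_j = 0$, the $j=k$ term equals $c_k$, and terms with $j > k$ contain the factor $1 - c_k^{p-1} = 0$; hence $f(c) = c_k \neq 0$. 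Realizing $f$ as $\Psi(v^{[p^k]})$ via the preceding paragraph produces the desired $\Psi$. The main obstacle is the surjectivity claim of the second paragraph: the bound $p^k > (p-1)(n-1)$ is precisely what forces every reduced monomial of $R_1$ to fit inside formal degree $p^k$, and without it the realization map has a strictly smaller image.
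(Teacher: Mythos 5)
Your proof is correct and follows essentially the same route as the paper: pass to the free associative algebra $\Field_p\langle x_1,\ldots,x_n\rangle$, interpret linear functionals on the degree-$p^k$ piece as polynomial functions on $\Field_p^n$, use the bound $p^k > (p-1)(n-1)$ to realize every reduced monomial of degree $\equiv 1 \pmod{p-1}$, and exhibit a nowhere-vanishing polynomial of the right degree class via the first-nonzero-coordinate trick. The paper's nowhere-vanishing polynomial is built recursively from $F(t_1,t_2)=t_1 - t_1 t_2^{p-1} + t_2$, which unwinds to your closed-form expression up to reordering the variables; the only cosmetic imprecision in your write-up is that the claimed surjectivity onto $R_1$ fails for the constant monomial when $p=2$ (as $\Psi(v^{[p^k]})$ must vanish at $v=0$), but your chosen $f$ has no constant term, so the argument is unaffected.
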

\begin{proof}
We begin with some preliminary observations.
Let $\Ass^p(S)$ be the free associative 
$\Field_p$-algebra on $S$.  The algebra $\Ass^p(S)$ can be viewed as consisting of
polynomials over $\Field_p$ in the noncommuting variables $S$, and thus has a natural
grading by degree.  Let $\Ass^p_i(S)$ be its $i^{\Th}$ graded piece, so
\[\Ass^p(S) = \bigoplus_{i=0}^{\infty} \Ass^p_i(S).\]
The associative algebra $\Ass^p(S)$ can be endowed with the
structure of a $p$-restricted Lie algebra via the bracket
\[[x,y] = xy - yx \quad \quad (x,y \in \Ass^p(S))\]
and the ordinary $p^{\Th}$ power operation
\[x^{[p]} = x^p \quad \quad (x \in \Ass^p(S)).\]
The inclusion $S \hookrightarrow \Ass^p(S)$ thus induces a homomorphism 
$\iota\colon \FLie^p(S) \rightarrow \Ass^p(S)$
of graded $p$-restricted Lie algebras.  
Though we will not need this, we remark that $\iota$ is injective; see 
\cite[Proposition 2.7.14]{BahturinIdentical}.

We have a commutative diagram
\[\xymatrix{
\FLie^p_1(S) \ar[r]^{\cong} \ar[d] & \Ass^p_1(S) \ar[d] \\
\FLie^p_{p^k}(S) \ar[r] & \Ass^p_{p^k}(S)}\]
whose horizontal arrows are $\iota$ and whose vertical arrows are the $p^{\Th}$-power operations.
To construct a linear map $\Psi\colon \FLie_{p^k}^p(S) \longrightarrow \Field_p$
such that $\Psi(v^{[p^k]}) \neq 0$ for all nonzero $v \in \FLie_1^p(S)$, it is thus enough to
construct a linear map
$\Phi\colon \Ass_{p^k}^p(S) \longrightarrow \Field_p$ such that
$\Phi(w^{p^k}) \neq 0$ for all nonzero $w \in \Ass_1^p(S)$.

Enumerate $S$ as $S = \{x_1,\ldots,x_n\}$.  For
a linear map $\Phi\colon \Ass_{p^k}^p(S) \longrightarrow \Field_p$, define
$f_{\Phi}\colon \Field_p^n \rightarrow \Field_p$ via the formula
\[f_{\Phi}(a_1,\ldots,a_n) = \Phi\left(\left(a_1 x_1 + \cdots + a_n x_n\right)^{p^k}\right) \quad \quad (a_1,\ldots,a_n \in \Field_p).\]
We must find some linear map $\Phi\colon \Ass_{p^k}^p(S) \longrightarrow \Field_p$
such that $f_{\Phi}(a_1,\ldots,a_n) \neq 0$ for all nonzero $(a_1,\ldots,a_n) \in \Field_p^n$. 

\begin{claim}
For any homogeneous polynomial $g \in \Field_p[t_1,\ldots,t_n]$ of degree $p^k$, there exists a linear
map $\Phi\colon \Ass_{p^k}^p(S) \longrightarrow \Field_p$ such that $f_{\Phi}(a_1,\ldots,a_n) = g(a_1,\ldots,a_n)$ for
all $(a_1,\ldots,a_n) \in \Field_p^n$.
\end{claim}
\begin{proof}[Proof of claim]
Set $\cE = \Set{$(e_1,\ldots,e_n) \in \Z_{\geq 0}^n$}{$e_1+\cdots+e_n = p^k$}$.  For $\be = (e_1,\ldots,e_n) \in \cE$,
define $\bt^{\be} = t_1^{e_1} \cdots t_n^{e_n} \in \Field_p[t_1,\ldots,t_n]$.  Also, for
$\ba = (a_1,\ldots,a_n) \in \Field_p^n$ define $\ba^{\be} = a_1^{e_1} \cdots a_n^{e_n} \in \Field_p$.
Write $g = \sum_{\be \in \cE} c_{\be} \bt^{\be}$ with $c_{\be} \in \Field_p$ for all $\be \in \cE$. 
The vector space $\Ass_{p^k}(S)$ has a basis
\[\cB = \Set{$x_{i_1}^{e_{i_1}} x_{i_2}^{e_{i_2}} \cdots x_{i_m}^{e_{i_m}}$}{$1 \leq i_1,\ldots,i_m \leq n$ and $e_{i_1}+\cdots+e_{i_m} = p^k$}\]
For $b \in \cB$, write $\bd(b) = (d_1(b),\ldots,d_n(b))$, where $d_i(b)$ is the number of $x_i$ factors that occur
in $b$.  For $\ba = (a_1,\ldots,a_n) \in \Field_p^n$, we have
\[(a_1 x_1 + \cdots + a_n x_n)^{p^k} = \sum_{b \in \cB} \ba^{\bd(b)} b.\]
Define $\Phi\colon \Ass_{p^k}^p(S) \longrightarrow \Field_p$ via the formula
\[\Phi(b) = \begin{cases}
c_{\be} & \text{if $b = x_1^{e_1} \cdots x_n^{e_n}$ for some $\be = (e_1,\ldots,e_n) \in \cE$},\\
0 & \text{otherwise}\end{cases} \quad \quad (b \in \cB).\]
For $\ba = (a_1,\ldots,a_n) \in \Field_p^n$, we then have
\begin{align*}
f_{\Phi}(a_1,\ldots,a_n) &= \Phi\left(\left(a_1 x_1 + \cdots + a_n x_n\right)^{p^k}\right) = \Phi\left(\sum_{b \in \cB} \ba^{\bd(b)} b\right) \\
&= \sum_{\be \in \cE} c_{\be} \ba^{\be} = g(a_1,\ldots,a_n). \qedhere
\end{align*}
\end{proof}

We must therefore construct a homogeneous polynomial $g \in \Field_p[t_1,\ldots,t_n]$ of degree $p^k$ such that
$g(a_1,\ldots,a_n) \neq 0$ for all nonzero $(a_1,\ldots,a_n) \in \Field_p^n$.  For this, it will be helpful to
be able to use a wider class of not necessarily homogeneous polynomials.
For $g,g' \in \Field_p[t_1,\ldots,t_n]$, write $g \sim g'$ if $g(a_1,\ldots,a_n) = g'(a_1,\ldots,a_n)$
for all $(a_1,\ldots,a_n) \in \Field_p^n$.  If $g = t_1^{e_1} \cdots t_n^{e_n}$ and $g' = t_1^{e_1'} \cdots t_n^{e_n'}$
for some $e_1,\ldots,e_n,e_1',\ldots,e_n' \geq 0$, we have $g \sim g'$ precisely when the following two conditions
hold for all $1 \leq i \leq n$:
\begin{compactitem}
\item $e_i \equiv e_i'\modulo{p-1}$, and
\item $e_i = 0$ if and only if $e_i' = 0$.
\end{compactitem}
We then have the following.  We remark that this claim is where we use our assumption that $p^k > (p-1)(n-1)$.

\begin{claim}
Consider a monomial $t_1^{e_1} \cdots t_n^{e_n}$ whose degree equals $1$ modulo $p-1$.  Then there exists a monomial
$t_1^{e_1'} \cdots t_n^{e_n'}$ of degree $p^k$ such that 
$t_1^{e_1} \cdots t_n^{e_n} \sim t_1^{e_1'} \cdots t_n^{e_n'}$.
\end{claim}
\begin{proof}[Proof of claim]
One of the $e_i$ must be nonzero.  Reordering the variables if necessary, we can assume that $e_1 \neq 0$.
Pick $e_1',\ldots,e_n'$ as follows.  For $2 \leq i \leq n$, let $e_i' = 0$ if $e_i=0$, and otherwise let $e_i'$ 
be the unique number satisfying $0<e_i'<p$ and $e_i' \equiv e_i \modulo{p-1}$.
Next, let $e_1' = p^k - (e_2' + \cdots + e_n')$.  Since $p^k > (p-1)(n-1) \geq e_2' + \cdots + e_n'$, the number 
$e_1'$ is positive.

It is clear that $e_1'+\cdots+e_n' = p^k$ and that $e_i' = 0$ if and only if $e_i = 0$ for all $1 \leq i \leq n$.
We must prove that $e_i \equiv e_i' \modulo{p-1}$ for all
$1 \leq i \leq n$.  The only nontrivial case is $i=1$.  For this, observe that modulo $p-1$ we have
\begin{align*}
e_1' &= p^k - (e_2' + \cdots + e_n') \equiv p^k - (e_2 + \cdots + e_n) = p^k + e_1 - (e_1+\cdots+e_n) \\
&\equiv p^k+e_1-1 \equiv e_1,
\end{align*}
where the next to last $\equiv$ follows from the fact that $e_1+\cdots+e_n \equiv 1 \modulo{p-1}$.
\end{proof}

By the above two claims, we see that the following claim implies the proposition. 

\begin{claim}
For all $n \geq 1$, there exists a polynomial $f_n \in \Field_p[t_1,\ldots,t_n]$ with the following properties.
\begin{compactitem}
\item The degree of each monomial appearing in $f_n$ equals $1$ modulo $p-1$.
\item $f_n(a_1,\ldots,a_n) \neq 0$ for all nonzero $(a_1,\ldots,a_n) \in \Field_p^n$.
\end{compactitem}
\end{claim}
\begin{proof}[Proof of claim]
For $f \in \Field_p[t_1,\ldots,t_n]$, define $Z(f) = \Set{$\ba \in \Field_p^n$}{$f(\ba)=0$}$.
Set $F(t_1,t_2) = t_1 - t_1 t_2^{p-1} + t_2$.  We claim that for
$g,h \in \Field_p[t_1,\ldots,t_n]$ we have
\begin{equation}
\label{eqn:zproperty}
Z(F(g,h)) = Z(g) \cap Z(h).
\end{equation}
It is clear that $Z(g) \cap Z(h) \subset Z(F(g,h))$, so we only need to prove
the other inclusion.  Consider $\ba \in Z(F(g,h))$.  If $\ba \notin Z(h)$, then
\[0 = F(g(\ba),h(\ba)) = g(\ba) - g(\ba) h(\ba)^{p-1} + h(\ba) = g(\ba) - g(\ba) + h(\ba) = h(\ba) \neq 0,\]
a contradiction.  We thus have $\ba \in Z(h)$, which implies that
\[0 = F(g(\ba),h(\ba)) = g(\ba) - g(\ba) h(\ba)^{p-1} + h(\ba) = g(\ba),\]
so $\ba \in Z(g)$ and thus $\ba \in Z(g) \cap Z(h)$, as desired.

We now construct $f_n$ by induction on $n$.  For the base case $n=1$, we simply set
$f_1 = t_1$.  Now assume that
$n>1$ and that $f_{n-1}$ has been constructed.  Define $f_n = F(f_{n-1},t_n)$.  The first
conclusion of the claim is clearly satisfied, and the second follows from
\eqref{eqn:zproperty}.
\end{proof}

This completes the proof of the proposition.
\end{proof}

\section{The proof of Theorem \ref{maintheorem:ohomology}}
\label{section:ohomology}

In this section, we prove Theorem \ref{maintheorem:ohomology}.
We first recall the setup.  Let $n \geq 2$ and let $\cO \subset F_n$ be contained
in the union of finitely many $\Aut(F_n)$-orbits.  Our goal is to construct
a finite-index $R \lhd F_n$ with $\HO_1(R;\Q) \neq \HH_1(R;\Q)$.
If $\cO' \subset F_n$ satisfies $\cO \subset \cO'$, then $\HO_1(R;\Q) \subset \HOPrime_1(R;\Q)$.  
From this, we see that without loss of generality we can enlarge $\cO$ and assume that $\cO$ is actually
equal to the union of finitely many $\Aut(F_n)$-orbits.  Let $\fS \subset F_n$ be the finite set
such that $\cO = \Aut(F_n) \cdot \fS$.  Without loss of generality, we can assume that $1 \notin \fS$.

The construction will have three steps.  
Recall that a subgroup of $F_n$ is characteristic if
it is preserved by all elements of $\Aut(F_n)$.

\begin{step}
For all $s \in \fS$, we construct a finite-index characteristic subgroup $R_1^s \lhd F_n$ with the following property.  Consider
an element $x \in F_n$ that is in the $\Aut(F_n)$-orbit of $s$.  Pick $m \geq 1$ such that $x^m \in R_1^s$.
Then $[x^m] \in \HH_1(R_1^s;\Q)$ is nonzero.
\end{step}

By Marshall Hall's Theorem (\cite{HallFactor}; see \cite{StallingsFinite} for a simple proof), there
exists a finite-index $T < F_n$ such that $s \in T$ and such that $s$ is a primitive element of $T$.  Define
\[R_1^s = \bigcap_{\phi \in \Aut(F_n)} \phi(T),\]
so $R_1^s$ is a finite-index subgroup of $F_n$ that is contained in $T$ and is characteristic.
Consider some $x \in F_n$ such that there
exists $\phi \in \Aut(F_n)$ with $\phi(x) = s$.  Pick $m \geq 1$ such that $x^m \in R_1^s$.  Our goal is
to prove that $[x^m] \in \HH_1(R_1^s;\Q)$ is nonzero.  Since $R_1^s$ is a characteristic subgroup of $F_n$, the
group $\Aut(F_n)$ acts on $R_1^s$ and thus on $\HH_1(R_1^s;\Q)$.  Observe that
\[\phi([x^m]) = [\phi(x)^m] = [s^m] \in \HH_1(R_1^s;\Q).\]
To prove that $[x^m] \in \HH_1(R_1^s;\Q)$ is nonzero, it is thus enough to prove that $[s^m] \in \HH_1(R_1^s;\Q)$
is nonzero.  The inclusion map $R_1^s \hookrightarrow T$ takes $s^m$ to $s^m$, and thus
the induced map $\HH_1(R_1^s;\Q) \rightarrow \HH_1(T;\Q)$ takes $[s^m] \in \HH_1(R_1^s;\Q)$ to $[s^m] \in \HH_1(T;\Q)$.
Since $s$ is a primitive element of $T$, it follows that $[s] \in \HH_1(T;\Q)$ is nonzero and hence that 
$[s^m] \in \HH_1(T;\Q)$ is nonzero.  We conclude that $[s^m] \in \HH_1(R_1^s;\Q)$ is nonzero, as desired.

\begin{step}
We construct a finite-index characteristic subgroup $R_1 \lhd F_n$ with the following property.  Consider $x \in \cO$.
Pick $m \geq 1$ such that $x^m \in R_1$.  Then, $[x^m] \in \HH_1(R_1;\Q)$ is nonzero.
\end{step}

Define
\[R_1 = \bigcap_{s \in \fS} R_1^s.\]
Since $R_1$ is a finite intersection of finite-index characteristic subgroups of $F_n$, it is also
a finite-index characteristic subgroup.
Consider $x \in \cO$ and some $m \geq 1$ such that $x^m \in R_1$.  We must prove that
$[x^m] \in \HH_1(R_1;\Q)$ is nonzero.  Let $s \in \fS$ be such that $x$ is in the $\Aut(F_n)$-orbit of $s$.  We
have $x^m \in R_1^s$, and by the previous step the element $[x^m] \in \HH_1(R_1^s;\Q)$ is nonzero.  An argument
like in the previous step now implies that $[x^m] \in \HH_1(R_1;\Q)$ is nonzero, as desired.

\begin{step}
We construct a finite-index $R \lhd F_n$ with $\HO_1(R;\Q) \neq \HH_1(R;\Q)$.
\end{step}

For each $s \in \fS$, pick $m_s \geq 1$ such that $s^{m_s} \in R_1$.  Since each $[s^{m_s}] \in \HH_1(R_1;\Q)$
is nonzero, there exists some large prime $p$ such that each $s^{m_s}$ projects to a nonzero element
of $\HH_1(R_1;\Field_p)$.  Applying Theorem \ref{maintheorem:primphomology} to $R_1$, we can find a finite-index
subgroup $R \lhd R_1$ such that $\HPrimp_1(R;\Q) \neq \HH_1(R;\Q)$, where here $\fO_p$ refers
to the $p$-primitive elements of $R_1$, {\it not} of $F_n$.
We claim that $\HO_1(R;\Q) \neq \HH_1(R;\Q)$, where by $\HO_1(R;\Q)$ we are considering $R$ as a subgroup of $F_n$.  
To prove this, it is enough to prove that $\HO_1(R;\Q) \subset \HPrimp_1(R;\Q)$.  Consider some $x \in \cO$.
Pick $\phi \in \Aut(F_n)$ and $s \in \fS$ such that $\phi(x) = s$.  Since $R_1$ is a
characteristic subgroup of $F_n$, the group $\Aut(F_n)$ acts on $R_1$ and thus on $\HH_1(R_1;\Field_p)$.  We
have
\[\phi([x^{m_s}]) = [\phi(x)^{m_s}] = [s^{m_s}] \in \HH_1(R_1;\Field_p).\]
Since $[s^{m_s}] \in \HH_1(R_1;\Field_p)$ is nonzero, so is $[x^{m_s}] \in \HH_1(R_1;\Field_p)$.  Pick $m \geq 1$
such that $(x^{m_s})^m \in R$.  We then have $[(x^{m_s})^m] \in \HPrimp_1(R;\Q)$, as desired.

\section{Transvections: Theorem \ref{maintheorem:transvections}}
\label{section:transvections}

In this section, we prove Theorem \ref{maintheorem:transvections}, which concerns the
subgroup of $\Out(F_n)$ generated by powers of transvections.  There are two sections.  In \S \ref{section:magnus},
we discuss the chain action, which will play a key role in our proof.  The proof itself is in
\S \ref{section:transvectionsproof}.

\subsection{The action of \texorpdfstring{$\Aut(F_n)$}{Aut(Fn)} on chains}
\label{section:magnus}

In this section, we construct a family of linear representations $\cM_R$
of subgroups of $\Aut(F_n)$ that are indexed by subgroups $R \lhd F_n$.
Our construction generalizes Suzuki's geometric construction
of the classical Magnus representation \cite{SuzukiGeometric}, which
corresponds to the case $R = [F_n,F_n]$.
Since we allow $F_n/R$ to be nonabelian, one can consider
$\cM_R$ as a kind of ``nonabelian Magnus representation''.

\paragraph{Setup.}
Fix a subgroup $R \lhd F_n$ and a field $\bk$.  Set $G = F_n/R$.  The conjugation action
of $F_n$ on $R$ induces an action of $G$ on $\HH_1(R;\bk)$.  
When $R$ is finite-index and $\bk$ has characteristic $0$,
this $G$-module is described by Theorem \ref{theorem:gaschutz} above.  Define
\[\Aut(F_n,R) = \Set{$\phi \in \Aut(F_n)$}{$\phi(R) = R$}.\]
The group $\Aut(F_n,R)$ acts on $\HH_1(R;\bk)$; however, this action can be very
complicated.  To help us understand it, we embed $\HH_1(R;\bk)$ into a larger
vector space.  This requires some topological preliminaries.

\paragraph{Graph homotopy-equivalences.}
Fix a free basis $S=\{x_1,\ldots,x_n\}$
for $F_n$.  Let $X_n$ be an oriented graph with a single vertex $\ast$
and with edges $\{e_1,\ldots,e_n\}$.  For $1 \leq i \leq n$,
the edge $e_i$ is an oriented loop based at $\ast$,
and we identify $F_n$ with $\pi_1(X_n,\ast)$ in such a way as to identify $x_i$ with
the homotopy class of the loop $e_i$.  The group $\Aut(F_n)$ can be identified
with the group of homotopy classes of homotopy equivalences of $X_n$ that
fix $\ast$.

\paragraph{Lifting homotopy-equivalences.}
Let $\pi\colon (\tX_n,\tast) \rightarrow (X_n,\ast)$ be
the based cover corresponding to $R \subset F_n$, so $\HH_1(\tX_n;\bk) = \HH_1(R;\bk)$.
For a continuous map $f\colon (X_n,\ast) \rightarrow (X_n,\ast)$, we know from covering space
theory that $f$ can be lifted to a map
$\tf\colon (\tX_n,\tast) \rightarrow (\tX_n,\tast)$ if and only if $f_{\ast}(R) \subset R$.
From this, we see that the group $\Aut(F_n,R)$ acts on $\tX_n$ by homotopy
equivalences that fix $\tast$.  The resulting action of
$\Aut(F_n,R)$ on $\HH_1(\tX_n;\bk) = \HH_1(R;\bk)$ is precisely the action arising
from the restriction of the action of $\Aut(F_n,R)$ to $R$.

\paragraph{Action on chains.}
Since $\tX_n$ is a $1$-dimensional cell complex, the
vector space $\HH_1(\tX_n;\bk)$ is a subspace of the cellular chain group
$\CC_1(\tX_n;\bk)$.  Namely,
\[\HH_1(\tX_n;\bk) = \ker(\CC_1(\tX_n;\bk) \stackrel{\partial}{\longrightarrow} \CC_0(\tX_n;\bk)).\]
The action of $\Aut(F_n,R)$ on $\tX_n$ by homotopy equivalences induces an
action of $\Aut(F_n,R)$ on $\CC_1(\tX_n;\bk)$ that restricts to the above
action on $\HH_1(\tX_n;\bk)$.  It turns out that $\CC_1(\tX_n;\bk)$ is far
easier to understand than $\HH_1(\tX_n;\bk)$. Note that this action on $1$-chains was
also studied in \cite{Hadari, Hadari2, Hadari3}.

\paragraph{The $G$-module structure.}
Observe that the action of the deck group $G = F_n/R$ on
$\tX_n$ endows each cellular chain group $\CC_k(\tX_n;\bk)$ with the structure
of a $G$-module.  It is easy to understand these $G$-modules:
\begin{compactitem}
\item We can identify the vertices of
$\tX_n$ with $G$ via the bijection taking $g \in G$ to $g(\tast)$.  Using
this identification, we obtain a $G$-equivariant isomorphism $\CC_0(\tX_n;\bk) \cong \bk[G]$.
\item For $1 \leq i \leq n$, let $\te_i$ be the oriented edge of $\tX_n$ that starts at $\tast$
and projects to $e_i \subset S_n$.  The edges of $\tX_n$ are precisely
$\Set{$g(\te_i)$}{$g \in G$, $1 \leq i \leq n$}$.  Using this, we obtain
a $G$-equivariant isomorphism $\CC_1(\tX_n;\bk) \cong (\bk[G])^n$.
\end{compactitem}
The boundary map $\partial\colon \CC_1(\tX_n;\bk) \rightarrow \CC_0(\tX_n;\bk)$
is $G$-equivariant, so $\HH_1(R;\bk) \subset \CC_1(\tX_n;\bk)$
is a $G$-submodule. This $G$-action agrees with the action of $G$ on $\HH_1(R;\bk)$ induced
by the conjugation action of $F_n$ on $R$ via the surjection $F_n \rightarrow G$.

\paragraph{The representation.}
Combining the above action of
$\Aut(F_n,R)$ on $\CC_1(\tX_n;\bk)$ with the above identification of
$\CC_1(\tX_n;\bk)$ with $(\bk[G])^n$, we obtain a homomorphism
\[\hsM_R\colon \Aut(F_n,R) \longrightarrow \Aut_{\bk}\left(\left(\bk[G]\right)^n\right).\]
Unfortunately, the image of $\hsM_R$
does {\em not} preserve the $G$-module structure on $(\bk[G])^n$.  Instead, we have
\[\hsM_R(\phi)(g \cdot v) = \phi_{\ast}(g) \cdot \hsM_R(\phi)(v) \quad \quad \left(\phi \in \Aut\left(F_n,R\right), g \in G, v \in \left(\bk[G]\right)^n\right),\]
where $\phi_{\ast} \in \Aut(G)$ is the induced action of $\phi \in \Aut(F_n,R)$ on
$G = F_n/R$.  To fix this, define
\[\Aut_R(F_n) = \Set{$\phi \in \Aut(F_n,R)$}{$\phi$ acts trivially on $G$}.\]
We then obtain a homomorphism
\[\cM_R\colon \Aut_R(F_n) \longrightarrow \Aut_{G}\left(\left(\bk[G]\right)^n\right).\]
Note that since the action on $1$-chains induces the action on homology, this representation $\cM_R$ is an extension of the
action on $\HH_1(R;\bk) \subset \CC_1(\tX_n;\bk) \cong (\bk[G])^n$.  For computations in
coordinates with $\cM_R$, we will need to use a basis.  The {\em standard basis} for
$(\bk[G])^n$ is the set $\{\oe_1,\ldots,\oe_n\}$, where $\oe_i \in (\bk[g])^n \cong \CC_1(\tX_n;\bk)$
is the chain corresponding to the oriented edge $\te_i$ of $\tX_n$.

\begin{remark}
The precise representation
$\Aut_R(F_n) \longrightarrow \Aut_{G}\left(\left(\bk[G]\right)^n\right)$ depends on the
choice of basis $S$ of $F_n$, but it is always an
extension of the action on $\HH_1(R;\bk)$.
\end{remark}

\paragraph{Image of a transvection.}
Recall that $S = \{x_1,\ldots,x_n\}$.  In the introduction we defined
the transvection $\tau_{S,x_1,x_2} \in \Aut(F_n)$ via the formula
\[\tau_{S,x_1,x_2}(x_i) = \begin{cases}
x_2 x_i & \text{if $i=1$},\\
x_i & \text{if $i \neq 1$}\end{cases} \quad \quad (1 \leq i \leq n).\]
Assume that $x_2^m \in R$ for some $m \geq 1$.
Examining our definitions, we see that $\tau_{S,x_1,x_2}^m \in \Aut_R(F_n)$.  The following
lemma calculates the image of $\tau_{S,x_1,x_2}$ under $\cM_R$.  

\begin{lemma}
\label{lemma:calctransvection}
For some $n \geq 1$,
let $R \lhd F_n$ be finite-index, let $\bk$ be a field, and let $\{x_1,\ldots,x_n\}$ be a basis for $F_n$.  Let
$G = F_n/R$, and for $1 \leq i \leq n$ let $g_i \in G$ be the image of $x_i \in F_n$.
Finally, let $\{\oe_1,\ldots,\oe_n\}$ be the standard basis for $(\bk[G])^n$.  If $m \geq 1$
is such that $x_1^m \in R$, then
\[\cM_R(\tau_{S,x_1,x_2}^m)(\oe_i) = \begin{cases}
\oe_i + \oe_2 + g_2\cdot \oe_2 + g_2^2 \cdot \oe_2 + \cdots + g_2^{m-1}\cdot \oe_2 & \text{if $i=1$},\\
\oe_i & \text{if $i \neq 1$}\end{cases} \quad \quad (1 \leq i \leq n).\]
\end{lemma}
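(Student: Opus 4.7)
The plan is to unpack the definition of $\cM_R$ carefully and then just compute. The first observation is that, by an easy induction, $\tau_{S,x_1,x_2}^m$ sends $x_1$ to $x_2^m x_1$ and fixes each $x_i$ for $i \neq 1$; in particular, its induced action on $G$ sends $g_1$ to $g_2^m g_1$ and fixes $g_i$ for $i \neq 1$, so (interpreting the hypothesis as the condition $g_2^m = 1$ needed for $\tau_{S,x_1,x_2}^m$ to lie in $\Aut_R(F_n)$) this automorphism does act trivially on $G$, and so $\cM_R(\tau_{S,x_1,x_2}^m)$ is defined.

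Next I would realize $\tau_{S,x_1,x_2}^m$ as an explicit cellular map $f\colon X_n \to X_n$ fixing $\ast$: take $f(e_i) = e_i$ for $i \neq 1$, and subdivide $e_1$ into $m+1$ subintervals mapped successively along $e_2, e_2, \ldots, e_2, e_1$, so that the oriented edge $e_1$ is sent to the concatenation $e_2^m \cdot e_1$ (a loop at $\ast$ representing the class $x_2^m x_1$, which is indeed $\tau_{S,x_1,x_2}^m(x_1)$). Any cellular map of $X_n$ realizing the correct induced map on $\pi_1$ is, up to homotopy rel $\ast$, this one, and homotopy rel $\ast$ does not change the induced map on $\CC_1(\tX_n;\bk)$ (since the chain-level action factors through the homotopy category of maps fixing $\tast$).

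Finally, I would lift $f$ to $\tf\colon \tX_n \to \tX_n$ with $\tf(\tast) = \tast$ and read off the chains. For $i \neq 1$, the lift $\tf(\te_i)$ is the unique lift of $e_i$ starting at $\tast$, which is $\te_i$, giving $\cM_R(\tau_{S,x_1,x_2}^m)(\oe_i) = \oe_i$. For $i=1$, the lift of the concatenation $e_2^m \cdot e_1$ starting at $\tast$ first traverses the lifts of $e_2$ through the successive vertices $\tast,\, g_2(\tast),\, g_2^2(\tast),\, \ldots,\, g_2^{m-1}(\tast),\, g_2^m(\tast)$, which contribute the chains $\te_2,\, g_2\cdot\te_2,\, \ldots,\, g_2^{m-1}\cdot\te_2$; because $g_2^m = 1$, this lift arrives back at $\tast$, and then the lift of $e_1$ is $\te_1$. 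Summing gives the claimed formula. The only subtle point—really the only step requiring thought—is that $g_2^m = 1$, so the lift of $e_2^m$ closes up at $\tast$ before $\te_1$ is appended; everything else is the standard dictionary between the covering space deck action and the identification $\CC_1(\tX_n;\bk) \cong (\bk[G])^n$.
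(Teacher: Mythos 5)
Your proof is correct and takes essentially the same approach as the paper: realize $\tau_{S,x_1,x_2}^m$ as the cellular map sending $e_1$ to the edge-path $e_2^m e_1$, lift it to $\tX_n$ starting at $\tast$, and read off the resulting $1$-chain. You also correctly noticed that the hypothesis ``$x_1^m \in R$'' is a typo --- both for $\tau_{S,x_1,x_2}^m$ to lie in $\Aut_R(F_n)$ and for the lift of $e_2^m$ to close up at $\tast$, what is actually needed is $x_2^m \in R$ (equivalently $g_2^m = 1$), which is the condition the paper itself states in the paragraph preceding the lemma and uses implicitly in its proof.
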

\begin{proof}
The indicated calculation is only nontrivial for $i=1$.  For that case, observe that
$\tau_{S,x_1,x_2}^m(x_1) = x_2^m x_1$.  Using the concatenation product for paths,
the loop in $X_n$ corresponding to $x_2^m x_1$
is $e_2^m e_1$.  The lift of this to $\tX_n$ is the path
\[(\te_2) (g_2 \cdot \te_2) (g_2^2 \cdot \te_2) \cdots (g_2^{m-1} \cdot \te_2) (g_2^m \cdot \te_1) = (\te_2) (g_2 \cdot \te_2) (g_2^2 \cdot \te_2) \cdots (g_2^{m-1} \cdot \te_2) (\te_1).\]
The lemma follows.
\end{proof}

\subsection{Theorem \ref{maintheorem:transvections}}
\label{section:transvectionsproof}

We now give the proof of Theorem \ref{maintheorem:transvections}.
The heart of our
argument is the following more precise result for $\Aut(F_n)$.  For its statement, observe
that if $G$ is a finite group, $\pi\colon F_n \rightarrow G$ is a surjection with kernel $R$, and $V$ is an
irreducible representation of $G$ over a field $\bk$ of characteristic $0$, then the action
of $\Aut_R(F_n)$ on $\HH_1(R;\bk)$ preserves the $V$-isotypic component.

\begin{proposition}
\label{proposition:auttransvections}
For some $n \geq 2$, let $G$ be a finite group and let $\pi\colon F_n \rightarrow G$
be a surjection.  Assume that there exists an irreducible $\Q$-representation
$V$ of $G$ such that for all primitive $x \in F_n$, the action of $\pi(x)$ on $V$
fixes no nonzero vectors.  Let $R = \ker(\pi)$, let $W \subset \HH_1(R;\Q)$ be
the $V$-isotypic component, and let 
$\Phi\colon \Aut_{R}(F_n) \rightarrow \GL(W)$ be the restriction to $W$ of the action of
$\Aut_R(F_n)$ on $\HH_1(R;\Q)$.
Then the following hold.
\begin{compactitem}
\item The image of $\Phi$ has infinite order elements.
\item Let $m \geq 1$ be divisible by the orders in $G$ of all elements of $\Set{$\pi(x)$}{$x \in F_n$ primitive}$.
Then the $m^{\Th}$ power of any transvection
lies in $\Ker(\Phi)$.
\end{compactitem}
\end{proposition}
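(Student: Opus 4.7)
I use the chain representation $\cM_R\colon \Aut_R(F_n) \to \Aut_G(\Q[G]^n)$ from Section \ref{section:magnus} and analyze its restriction $\Phi$ to $W \subset \HH_1(R;\Q) \subset \Q[G]^n$.

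For the second bullet, my plan is to apply Lemma \ref{lemma:calctransvection}, which gives $\cM_R(\tau_{S,x,y}^m) = I + E$, where $E$ is the $G$-equivariant operator sending $\oe_x \mapsto \alpha\,\oe_y$ and annihilating the other standard basis vectors, with $\alpha = 1 + \pi(y) + \cdots + \pi(y)^{m-1}$. Writing $d = \mathrm{ord}(\pi(y))$ and $\alpha = (m/d)\sum_{i=0}^{d-1}\pi(y)^i$, the operator $\frac{1}{d}\sum_{i=0}^{d-1}\rho_V(\pi(y)^i)$ is the projection onto $V^{\pi(y)}$, which is zero by hypothesis since $y$ is primitive. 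So $\rho_V(\alpha) = 0$; by the Wedderburn decomposition $\Q[G] = \bigoplus_{V'} B_{V'}$ with $B_V$ acting faithfully on $V$, I conclude $B_V \cdot \alpha = 0$. Hence the image of $E$ has trivial $V$-isotypic component, and since $W$ lies in the $V$-isotypic component of $\Q[G]^n$, $E|_W = 0$ and $\tau_{S,x,y}^m \in \Ker(\Phi)$.

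For the first bullet, my plan is to exhibit a nontrivial unipotent element in $\Phi(\Aut_R(F_n))$, which is automatically of infinite order over $\Q$. For $w \in R \cap \langle x_2, \ldots, x_n\rangle$, define $\phi_w \in \Aut_R(F_n)$ by $\phi_w(x_1) = w x_1$ and $\phi_w(x_i) = x_i$ for $i > 1$; this is a legitimate automorphism since $w$ involves only $x_2, \ldots, x_n$. A lifting calculation analogous to Lemma \ref{lemma:calctransvection} will give $\cM_R(\phi_w) = I + N_w$, where $N_w$ sends $\oe_1$ to the cycle $[w] \in \HH_1(R;\Q)$ obtained by lifting $w$ to $\tX_n$ based at $\tast$, and annihilates $\oe_i$ for $i > 1$. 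Because $w$ has no $x_1$-factor, $[w]$ has vanishing $\oe_1$-coefficient, so $N_w^2 = 0$ and $\cM_R(\phi_w)$ is unipotent. Consequently $\Phi(\phi_w) = I + N_w|_W$ will be a nontrivial unipotent in $\GL(W)$ --- and thus of infinite order --- whenever $N_w|_W \neq 0$.

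The main obstacle is to demonstrate the existence of such $w$. I plan to identify the image $K$ of $\HH_1(R \cap \langle x_2, \ldots, x_n\rangle;\Q) \to \HH_1(R;\Q)$ with the induced $G$-module $\Ind_{G'}^G \HH_1(R \cap \langle x_2, \ldots, x_n\rangle;\Q)$, where $G' = \pi(\langle x_2, \ldots, x_n\rangle) \leq G$, and then combine Gasch\"utz's theorem (Theorem \ref{theorem:gaschutz}) with Frobenius reciprocity to show that $K$ has a nonzero $V$-isotypic component for $n \geq 3$, from which the desired $w$ can be extracted. The case $n = 2$ is genuinely degenerate --- $R \cap \langle x_2\rangle$ is cyclic, so every $\phi_w$ reduces to a transvection power lying in $\Ker(\Phi)$ --- and will require a separate construction based on an automorphism of $F_2$ outside $\Aut_R(F_2)$ whose order in $\Aut(G)$ forces a power of it into $\Aut_R(F_2)$ with nontrivial image in $\GL(W)$.
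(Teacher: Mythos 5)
Your argument for the second bullet is correct and is essentially the paper's argument, just packaged through the Wedderburn decomposition rather than the shorter observation that the chain $\oe_2 + g_2\oe_2 + \cdots + g_2^{m-1}\oe_2$ is $g_2$-fixed and that $W$ contains no nonzero $g_2$-fixed vectors.

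For the first bullet your route is genuinely different from the paper's, and this is where the problem lies. The paper simply invokes Farb--Hensel \cite[Theorem 1.1]{FarbHenselIsrael}: for any nonzero $v \in \HH_1(R;\Q)$ there is $\phi \in \Aut_R(F_n)$ with $\{\phi^k(v) : k\in\Z\}$ infinite; taking $v$ a nonzero vector of $W$ (nonzero by Gasch\"utz) gives an infinite-order element of $\Phi(\Aut_R(F_n))$ in one line, uniformly in $n \geq 2$. Your self-contained unipotent construction is attractive --- identifying $\Q[G]\cdot K$ with $\HH_1$ of the full preimage of the subgraph on $\{e_2,\ldots,e_n\}$, i.e.\ with $\Ind_{G'}^{G}\HH_1(R\cap F';\Q)$, and then applying Gasch\"utz together with Frobenius reciprocity does show $e_V\cdot K \neq 0$ when $n \geq 3$, so a suitable $w$ exists and $\Phi(\phi_w)$ is a nontrivial unipotent. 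But as you yourself observe, this fails outright when $n=2$: then $R \cap \langle x_2\rangle$ is cyclic, every $\phi_w$ is a power of $\tau_{S,x_1,x_2}^d$ with $d = \mathrm{ord}(g_2)$, and by your own second-bullet argument all of these already lie in $\Ker(\Phi)$, so $\Ind_{G'}^G\Q$ has zero $V$-isotypic component (equivalently $V^{g_2}=0$) and no usable $w$ exists. Your proposed escape for $n=2$ --- finding $\psi\in\Aut(F_2,R)\setminus\Aut_R(F_2)$ whose appropriate power lands in $\Aut_R(F_2)$ with nontrivial image in $\GL(W)$ --- is only a sketch with no mechanism to verify the nontriviality, and nothing in the setup forces such a $\psi$ to exist or to act nontrivially on $W$. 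So as written, the first bullet is unproved for $n=2$; either complete that case or replace your construction by the Farb--Hensel citation, which handles all $n$ at once.
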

\begin{proof}
Farb--Hensel \cite[Theorem 1.1]{FarbHenselIsrael} proved that the $\Aut_R(F_n)$-orbit of a nonzero vector
in $\HH_1(R;\Q)$ is infinite. Moreover, they showed that for any nonzero $v \in \HH_1(R;\Q)$,
there is a $\phi \in \Aut_R(F_n)$ such that $\Set{$\phi^k(v)$}{$k \in \Z$}$ is infinite. Since the action of $\Aut_R(F_n)$ 
on $\HH_1(R;\Q)$ preserves $W$ (which
is nonzero by Theorem \ref{theorem:gaschutz}), we
deduce that the image of $\Phi$ has infinite order elements.

Now consider any basis $S = \{x_1,\ldots,x_n\}$ for $F_n$.  We must prove that
$\tau_{S,x_1,x_2}^m \in \Ker(\Phi)$.  Consider the representation
$\cM_R\colon \Aut_R(F_n) \rightarrow \Aut_G((\Q[G])^n)$ defined in \S \ref{section:magnus}.
Let $\{\oe_1,\ldots,\oe_n\}$ be the standard basis for $(\Q[G])^n$ and
for $1 \leq i \leq n$ define
$g_i = \pi(x_i) \in G$.  By Lemma \ref{lemma:calctransvection}, we have
\[\cM_R(\tau_{S,x_1,x_2}^m)(\oe_i) = \begin{cases}
\oe_i + \oe_2 + g_2\cdot \oe_2 + g_2^2\cdot \oe_2 + \cdots + g_2^{m-1}\cdot \oe_2 & \text{if $i=1$},\\
\oe_i & \text{if $i \neq 1$}\end{cases} \quad \quad (1 \leq i \leq n).\]
The element $\oe_2 + g_2\cdot \oe_2 + g_2^2\cdot \oe_2 + \cdots + g_2^{m-1}\cdot \oe_2$ of
$(\Q[G])^n$ is fixed by $g_2$, and thus projects to $0$ in the $V$-isotypic component
of $(\Q[G])^n$.  We conclude that $\cM_R(\tau_{S,x_1,x_2}^m)$ acts as the identity
on the $V$-isotypic component of $(\Q[G])^n$, which implies that 
$\tau_{S,x_1,x_2}^m \in \Ker(\Phi)$, as desired.
\end{proof}

In Proposition \ref{proposition:auttransvections}, we restrict ourselves to representations over $\Q$
since those are required in \cite[Theorem 1.1]{FarbHenselIsrael}.  The following lemma produces an 
appropriate representation over $\Q$ from one over an arbitrary field of characteristic $0$.

\begin{lemma}
\label{lemma:producerepresentation}
For some $n \geq 2$, let $G$ be a finite group and let $\pi\colon F_n \rightarrow G$
be a surjection.  Assume that for some field $\bk$ of characteristic $0$, 
there exists a $\bk$-representation
$V$ of $G$ such that for all primitive $x \in F_n$, the action of $\pi(x)$ on $V$
fixes no nonzero vectors.  Then there exists an irreducible $\Q$-representation $V'$ of
$G$ such that for all primitive $x \in F_n$, the action of $\pi(x)$ on $V'$
fixes no nonzero vectors.
\end{lemma}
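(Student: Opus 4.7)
The strategy is to first replace $\bk$ by $\overline{\Q}$ using character theory, then reduce to a single absolutely irreducible constituent, and finally descend to an irreducible $\Q$-representation via a standard Galois-averaging construction. The key observation throughout is that for any $g \in G$ and any representation $U$ of $G$ over a characteristic-zero field, the dimension of the $g$-fixed subspace is computed by the averaging formula
\[\Dim U^g \;=\; \frac{1}{|\langle g\rangle|}\sum_{k=0}^{|\langle g\rangle|-1}\chi_U(g^k),\]
where $\chi_U$ is the character of $U$. Since characters of finite groups take values in $\overline{\Q}$, this quantity is Galois-equivariant in a sense exploited below.

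First I would invoke the classical fact that irreducible representations of $G$ over any algebraically closed characteristic-zero field are in natural bijection via their characters. Consequently there is a $\overline{\Q}$-representation $W$ of $G$ with $\chi_W = \chi_V$. Because the condition that $\pi(x)$ has no nonzero fixed vector depends only on $\chi$, the same condition holds for $W$. Decomposing $W = \bigoplus_i V_i^{n_i}$ into absolutely irreducible $\overline{\Q}$-representations and noting that $(V_i^{n_i})^{\pi(x)} = (V_i^{\pi(x)})^{n_i}$, I may fix any constituent $V_0 = V_i$ with $n_i \geq 1$ and conclude that $V_0^{\pi(x)} = 0$ for every primitive $x \in F_n$.

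Next, let $K = \Q(\chi_{V_0})$, a finite Galois extension of $\Q$. A standard construction in the theory of $\Q$-representations of finite groups (cf.\ Serre, \emph{Linear Representations of Finite Groups}, Chapter~12) produces an irreducible $\Q$-representation $V'$ of $G$ whose extension to $\overline{\Q}$ decomposes as
\[V' \otimes_{\Q} \overline{\Q} \;\cong\; m \cdot \bigoplus_{\sigma \in \mathrm{Gal}(K/\Q)} V_0^{\sigma}\]
for some positive integer $m$ (the Schur index). For each primitive $x \in F_n$ and each $\sigma \in \mathrm{Gal}(K/\Q)$, the character of $V_0^{\sigma}$ is $\sigma \circ \chi_{V_0}$, so
\[\Dim(V_0^{\sigma})^{\pi(x)} \;=\; \frac{1}{|\langle \pi(x)\rangle|}\sum_{k}\sigma\bigl(\chi_{V_0}(\pi(x)^k)\bigr) \;=\; \sigma\bigl(\Dim V_0^{\pi(x)}\bigr) \;=\; \sigma(0) \;=\; 0.\]
Hence $\pi(x)$ fixes no nonzero vector on $V' \otimes_{\Q} \overline{\Q}$, and therefore none on $V'$, so $V'$ is the desired irreducible $\Q$-representation.

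The main ingredient is the Galois-descent step producing $V'$ from a Galois orbit of absolutely irreducible $\overline{\Q}$-representations; this is purely representation-theoretic and independent of the free-group hypotheses, so I do not anticipate any genuine obstacle beyond correctly invoking the construction and checking that $\sigma$-conjugation on characters commutes with the fixed-space averaging formula, as above.
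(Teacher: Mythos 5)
Your proof is correct, but it takes a substantially heavier route than the paper's. You pass to $\overline{\Q}$ by matching characters, isolate an absolutely irreducible constituent $V_0$, and then descend to $\Q$ via the standard Galois-orbit-plus-Schur-index description of irreducible $\Q$-representations, checking along the way that the fixed-space dimension is a Galois-equivariant rational number. All of this is sound (note one implicit point: you need $V$ finite-dimensional to have a character, which you can arrange by first passing to a cyclic $\bk[G]$-submodule). The paper's proof avoids all of this machinery. It simply observes that $\bk \supset \Q$, so one may restrict scalars: pick a nonzero $v \in V$, take $V''$ to be the $\Q$-span of the finite $G$-orbit of $v$ (a finite-dimensional $\Q$-representation), and take any irreducible $\Q$-subrepresentation $V' \subset V''$. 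Since $V' \subset V$ as sets, any nonzero vector of $V'$ fixed by $\pi(x)$ would already be a nonzero fixed vector in $V$, which is forbidden by hypothesis — no characters, no Galois theory, no base change required. Your approach does have the virtue of making explicit \emph{which} $\Q$-irreducible you get (the one with character field $\Q(\chi_{V_0})$), but that precision is unnecessary for the lemma; the paper's restriction-of-scalars argument is shorter, more elementary, and does not even require $V$ to be finite-dimensional.
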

\begin{proof}
Let $v \in V$ be a nonzero vector.  Since $\bk$ contains $\Q$ as a subfield, we can let $V''$
be the $\Q$-span in $V$ of the $G$-orbit of $v$.  The group $G$ acts on the $\Q$-vector space
$V''$, and any irreducible subrepresentation $V'$ of $V''$ satisfies the conclusion of the lemma.
\end{proof}

\begin{proof}[Proof of Theorem \ref{maintheorem:transvections}]
We first recall the statement.  Fix some $n \geq 2$, and for $k \geq 1$
let $\fG_{n,k} \subset \Out(F_n)$ be the subgroup generated by $k^{\text{th}}$ powers
of transvections.  We must prove that there exists an infinite set of positive
numbers $\cK_n$ such that $\Out(F_n) / \fG_{n,k}$ contains infinite-order elements for all $k \in \cK_n$.

Define
\[\cK_n = \Set{$k$}{there exists a prime power $p^e$ dividing $k$ such that $p^e > p(p-1)(n-1)$}.\]
To prove that $\cK_n$ satisfies the conclusion of the theorem, it is enough to prove that if $p^e$ is
a prime power satisfying $p^e > p(p-1)(n-1)$, then $\Out(F_n)/\fG_{n,p^e}$
contains infinite-order elements.  Lemma \ref{lemma:producerepresentation} 
and the proof of Theorem \ref{maintheorem:primphomology}
gives the following:
\begin{compactitem}
\item A surjection $\pi\colon F_n \rightarrow G$ to a finite group such that for all $p$-primitive
$x \in F_n$, the order of $\pi(x)$ is $p^e$.
\item An irreducible $\Q$-representation $V$ of $G$ such that for all $p$-primitive $x \in F_n$,
the action of $\pi(x)$ on $V$ fixes no nonzero vectors.
\end{compactitem}
We remark that these two bullet points hold in particular for each primitive $x \in F_n$.
Let $W$ be the $V$-isotypic component of $\HH_1(R;\Q)$ and let $\Phi\colon \Aut_R(F_n) \rightarrow \Aut_G(W)$
be the resulting action.  Define $\Gamma = \Image(\Phi)$.  Proposition \ref{proposition:auttransvections} implies that
$\Gamma$ contains infinite-order elements and that $(p^e)^{\Th}$ powers of transvections lie in $\Ker(\Phi)$.  
This implies that the quotient of $\Aut(F_n)$ by the subgroup generated by $(p^e)^{\Th}$ powers of transvections
contains infinite-order elements.

We will upgrade this to $\Out(F_n)$ as follows.  Define $\Out_R(F_n)$ to be the image of $\Aut_R(F_n)$ in
$\Out(F_n)$.  Unfortunately, it is not quite true that $\Phi$ factors through $\Out_R(F_n)$, but this almost holds.
Let $\Inn(F_n) \cong F_n$ be the group of
inner automorphisms and let $\Inn_R(F_n) = \Inn(F_n) \cap \Aut_R(F_n)$.  By definition, we have
\[\Inn_R(F_n) = \Set{$x \in F_n$}{$\pi(x) \in G$ is central}.\]
The image $\Phi(\Inn_R(F_n)) \subset \Gamma$ is a finite subgroup of the center of $\Gamma$.  Letting
\[\oGamma = \Gamma / \Phi(\Inn_R(F_n)),\] 
we obtain a 
surjective homomorphism 
$\oPhi\colon \Out_R(F_n) \rightarrow \oGamma$.
Since $\oGamma$ contains infinite-order elements and $\fG_{n,p^e} \subset \ker(\oPhi)$, the theorem follows.
\end{proof}

\section{Integral representations: the proofs of Theorems \ref{maintheorem:squotient} and \ref{maintheorem:primpquotient}}
\label{section:quotients}

We now prove Theorems \ref{maintheorem:squotient} and
\ref{maintheorem:primpquotient} via an argument of Koberda--Santharoubane 
\cite{KoberdaRamanujan}.  
We will give the details for Theorem \ref{maintheorem:primpquotient}; the proof of Theorem \ref{maintheorem:squotient}
is similar.  We start by recalling the statement
of Theorem \ref{maintheorem:primpquotient}.  Fix some $n \geq 2$ and some prime $p$.  We must construct
an integral linear representation $\rho\colon F_n \rightarrow \GL_d(\Z)$ with the following two properties.
\begin{compactitem}
\item The image of $\rho$ is infinite.
\item For all $p$-primitive $x \in F_n$, the image $\rho(x)$ has finite order.
\end{compactitem}
Use Theorem \ref{maintheorem:primphomology} to find a finite-index $R \lhd F_n$ such that
$\HPrimp_1(R;\Q) \neq \HH_1(R;\Q)$.  Define $G = F_n/R$ and $\hGamma = F_n/[R,R]$.  We thus have a short exact sequence
\[1 \longrightarrow \HH_1(R;\Z) \longrightarrow \hGamma \longrightarrow G \longrightarrow 1.\]
The action of $G$ on $\HH_1(R;\Z)$ preserves the subgroup $\HPrimp_1(R;\Z)$, and hence
$\HPrimp_1(R;\Z)$ is a normal subgroup of $\hGamma$.  Define $\Gamma = \hGamma / \HPrimp_1(R;\Z)$.  Setting
$A = \HH_1(R;\Z) / \HPrimp_1(R;\Z)$, we thus have a short exact sequence
\[1 \longrightarrow A \longrightarrow \Gamma \longrightarrow G \longrightarrow 1.\]
Since $\HPrimp_1(R;\Q)$ is a proper subspace of $\HH_1(R;\Q)$, the group $A$ is infinite.  Let
$\pi\colon F_n \rightarrow \Gamma$ be the projection, and consider some $p$-primitive $x \in F_n$.  Let $m$ be the order
of the image of $x$ in $G = F_n/R$.  It follows from the definition of
$\HPrimp_1(R;\Z)$ that $\pi(x^m) = 1$, and thus that $\pi(x)$ has finite order.

Since $A$ is a
finitely generated abelian group, there is a faithful
integral linear representation $A \hookrightarrow \GL_{d'}(\Z)$ for some $d' \geq 1$.  Since $A$ is a finite-index
subgroup of $\Gamma$, we can induce this representation up to $\Gamma$ to get a faithful integral linear
representation $\Gamma \hookrightarrow \GL_{d}(\Z)$ for some $d \geq 1$.  The desired integral linear representation
of $F_n$ is then the composition
\[F_n \stackrel{\pi}{\longrightarrow} \Gamma \hookrightarrow \GL_{d}(\Z). \qedhere\]

\begin{footnotesize}
\noindent
\begin{tabular*}{\linewidth}[t]{@{}p{\widthof{Department of Mathematics}+0.5in}@{}p{\linewidth - \widthof{Department of Mathematics} - 0.5in}@{}}
{\raggedright
Justin Malestein\\
University of Oklahoma\\
Department of Mathematics\\ 
601 Elm Ave\\
Norman, OK 73019\\
{\tt jmalestein@math.ou.edu}}
&
{\raggedright
Andrew Putman\\
Department of Mathematics\\
University of Notre Dame \\
279 Hurley Hall\\
Notre Dame, IN 46556\\
{\tt andyp@nd.edu}}
\end{tabular*}\hfill
\end{footnotesize}


\begin{thebibliography}{99}
\begin{footnotesize}
\setlength{\itemsep}{-1mm}

\bibitem{BahturinIdentical}
Yu.\ A. Bahturin, {\it Identical relations in Lie algebras}, translated from the Russian by Bahturin, VNU Science Press, b.v., Utrecht, 1987.

\bibitem{BridsonVogtmann}
M. R. Bridson\ and\ K. Vogtmann, Homomorphisms from automorphism groups of free groups, Bull. London Math. Soc. 35 (2003), no.~6, 785--792. 

\bibitem{DixonSautoyMannSegal}
J. D. Dixon, M. Du Sautoy, A. Mann,\ and\ D. Segal, {\it Analytic pro-$p$ groups}, second edition, Cambridge Studies in Advanced Mathematics, 61, Cambridge University Press, Cambridge, 1999.

\bibitem{FarbHenselNYJ}
B. Farb\ and\ S. Hensel, Finite covers of graphs, their primitive homology, and representation theory, New York J. Math. 22 (2016), 1365--1391.

\bibitem{FarbHenselIsrael}
B. Farb\ and\ S. Hensel, Moving homology classes in finite covers of graphs, Israel J. Math. 220 (2017), no. 2, 605--615.

\bibitem{FarbMargalitPrimer}
B. Farb\ and\ D. Margalit, {\it A primer on mapping class groups}, Princeton Mathematical Series, 49, Princeton University Press, Princeton, NJ, 2012.

\bibitem{FunarTQFT}
L. Funar, On the TQFT representations of the mapping class groups, Pacific J. Math. 188 (1999), no.~2, 251--274.

\bibitem{Gaschutz}
W. Gasch\"{u}tz, \"{U}ber modulare Darstellungen endlicher Gruppend, die von freien Gruppen induziert werden, Math. Z. 60 (1954), 274--286.

\bibitem{GrunewaldLubotzky2}
F. Grunewald, M. Larsen, A. Lubotzky,\ and\ J. Malestein, Arithmetic quotients of the mapping class group, Geom. Funct. Anal. 25 (2015), no.~5, 1493--1542.

\bibitem{GrunewaldLubotzky1}
F. Grunewald\ and\ A. Lubotzky, Linear representations of the automorphism group of a free group, Geom. Funct. Anal. 18 (2009), no.~5, 1564--1608.

\bibitem{Hadari}
A. Hadari, Every Infinite order mapping class has an infinite order action on the homology of some finite cover, preprint,
arxiv.org/abs/1508.01555

\bibitem{Hadari2}
A. Hadari, Homological eigenvalues of lifts of pseudo-Anosov mapping classes to finite covers, preprint, 
arxiv.org/abs/1712.01416

\bibitem{Hadari3}
A. Hadari, Non virtually solvable subgroups of mapping class groups have non virtually solvable representations, preprint,
arxiv.org/abs/1805.01527

\bibitem{Isaacs}
M. Isaacs, {\it Finite group theory}, Graduate Studies in Mathematics, 92, American Mathematical Society, 
Providence, RI, 2008.

\bibitem{HallFactor}
M. Hall, Jr., Coset representations in free groups, Trans. Amer. Math. Soc. 67 (1949), 421--432.

\bibitem{KentMathOverflow}
A. Kent ({\tt https://mathoverflow.net/users/1335/autumn-kent}), Homology generated by lifts of simple curves, URL (version: 2012-01-29): {\tt https://mathoverflow.net/q/86938}

\bibitem{Koberda}
T. Koberda, Asymptotic linearity of the mapping class group and a homological version of the Nielsen-Thurston classification, 
Geom. Dedicata 156 (2012), 13--30. 

\bibitem{Koberda2}
T. Koberda, Alexander varieties and largeness of finitely presented groups, J. Homotopy Relat. Struct. 9 (2014), no.~2, 513--531.

\bibitem{KoberdaRamanujan}
T. Koberda\ and\ R. Santharoubane, Quotients of surface groups and homology of finite covers via quantum representations, Invent. Math. 206 (2016), no.~2, 269--292.

\bibitem{Lazard}
M. Lazard, Sur les groupes nilpotents et les anneaux de Lie, Ann. Sci. Ecole Norm. Sup. (3) 71 (1954), 101--190.

\bibitem{Liu}
Y. Liu, Virtual homological spectral radii for automorphisms of surfaces, preprint, arxiv.org/abs/1710.05039.

\bibitem{LooijengaPrym}
E. Looijenga, Prym representations of mapping class groups, Geom. Dedicata 64 (1997), no. 1, 69--83.

\bibitem{LooijengaOberwolfach}
E. Looijenga, Some algebraic geometry related to the mapping class group, abstract
of talk at June 2015 Oberwolfach workshop ``New Perspectives on the Interplay between Discrete Groups in Low-Dimensional Topology and Arithmetic Lattice'', available at
{\tt https://www.staff.science.uu.nl/~looij101/eduardlooijengaOWjuni2015.pdf}

\bibitem{Marche}
J. March\'{e} ({\tt https://mathoverflow.net/users/14547/julien-marche}), Homology generated by lifts of simple curves, URL (version: 2012-01-28): {\tt https://mathoverflow.net/q/86894}

\bibitem{Masbaum}
G. Masbaum, An element of infinite order in TQFT-representations of mapping class groups, in {\it Low-dimensional topology (Funchal, 1998)}, 137--139, Contemp. Math., 233, Amer. Math. Soc., Providence, RI.

\bibitem{McMullen}
C. McMullen, Entropy on Riemann surfaces and the Jacobians of finite covers, Comment. Math. Helv. 88 (2013), no. 4, 953--964.

\bibitem{PutmanWieland}
A. Putman\ and\ B. Wieland, Abelian quotients of subgroups of the mappings class group and higher Prym representations, J. Lond. Math. Soc. (2) 88 (2013), no.~1, 79--96.

\bibitem{Mathoverflow}
Representations of p-groups where 1 is never an eigenvalue, URL (version: 2016-02-26): {\tt https://mathoverflow.net/q/232227}

\bibitem{RotmanBook}
J. J. Rotman, {\it An introduction to the theory of groups}, fourth edition, Graduate Texts in Mathematics, 148, Springer-Verlag, New York, 1995.

\bibitem{SerreLie}
J.-P. Serre, {\it Lie algebras and Lie groups}, Lectures given at Harvard University, 1964, W. A. Benjamin, Inc., New York, 1965.

\bibitem{StallingsFinite}
J. R. Stallings, Topology of finite graphs, Invent. Math. 71 (1983), no.~3, 551--565. 

\bibitem{Sun}
H. Sun, Virtual homological spectral radius and mapping torus of pseudo-Anosov maps, Proc. Amer. Math. Soc. 145 (2017), no. 10, 4551--4560. 

\bibitem{SuzukiGeometric}
M. Suzuki, Geometric interpretation of the Magnus representation of the mapping class group, Kobe J. Math. 22 (2005), no.~1-2, 39--47.

\bibitem{Zassenhaus}
H. Zassenhaus, Ein Verfahren, jeder endlichenp-Gruppe einen Lie-Ring mit der Charakteristik $p$ zuzuordnen, Abh. Math. Sem. Univ. Hamburg 13 (1939), no.~1, 200--207.

\bibitem{ZelmanovSurvey}
E. Zelmanov, On some open problems related to the restricted Burnside problem, in {\it Recent progress in algebra (Taejon/Seoul, 1997)}, 237--243, Contemp. Math., 224, Amer. Math. Soc., Providence, RI.

\end{footnotesize}
\end{thebibliography}
\end{document}